\documentclass{amsart}
\usepackage{amsfonts}
\usepackage{esint}
\usepackage{color}
\usepackage{graphicx}

\newtheorem{thm}{Theorem}[section]
\newtheorem{lemma}[thm]{Lemma}

\theoremstyle{definition}

\newtheorem{definition}[thm]{Definition}
\newtheorem{example}[thm]{Example}
\theoremstyle{remark}
\newtheorem{remark}[thm]{Remark}
\numberwithin{equation}{section}

\numberwithin{equation}{section}
\newcommand{\R}{\mathbb R}

\parskip=8pt

\newcommand{\lam}{\lambda}
\newcommand{\Lam}{\Lambda}

\def \R {\mathbb{R}}

\def \div {\mathrm{div}}

\def \dist {\mathrm{dist}}

\newcommand{\cd}{\rightharpoonup}

\newcommand{\defeq}{\mathrel{\mathop:}=}

\begin{document}
	
\title[Maximal solutions for the $\infty$-eigenvalue problem]{Maximal solutions for the $\infty$-eigenvalue problem}

\author[J.V. da Silva, J.D. Rossi and A.M. Salort]{Jo\~{a}o Vitor da Silva, Julio D. Rossi and Ariel M. Salort}

\address{Departamento de Matem\'atica, FCEyN - Universidad de Buenos Aires and
\hfill\break \indent IMAS - CONICET
\hfill\break \indent Ciudad Universitaria, Pabell\'on I (1428) Av. Cantilo s/n. \hfill\break \indent Buenos Aires, Argentina.}

\email[J.D. Rossi]{jrossi@dm.uba.ar}
\urladdr{http://mate.dm.uba.ar/~jrossi}

\email[A.M. Salort]{asalort@dm.uba.ar}
\urladdr{http://mate.dm.uba.ar/~asalort}

\email[J. V. da Silva]{jdasilva@dm.uba.ar}

\subjclass[2010]{35B27, 35J60, 35J70}

\keywords{Maximal solutions, Eigenvalue problems, Degenerate fully nonlinear elliptic equations, Infinity-Laplacian operator}

\begin{abstract}
In this article we prove that the first eigenvalue of the $\infty-$Laplacian
$$
\left\{
\begin{array}{rclcl}
  \min\{ -\Delta_\infty v,\, |\nabla v|-\lambda_{1, \infty}(\Omega)   v \}  & = & 0 & \text{in} & \Omega \\
  v & = & 0 & \text{on} & \partial \Omega,
\end{array}
\right.
$$
has a unique (up to scalar multiplication) maximal solution.
This maximal solution can be obtained as the limit as $\ell \nearrow 1$ of concave problems of the form
$$
\left\{
\begin{array}{rclcl}
  \min\{ -\Delta_\infty v_{\ell},\, |\nabla v_{\ell}|-\lambda_{1, \infty}(\Omega)   v_{\ell}^{\ell} \}  & = & 0 & \text{in} & \Omega \\
  v_{\ell} & = & 0 & \text{on} & \partial \Omega.
\end{array}
\right.
$$
In this way we obtain that the maximal eigenfunction is the unique one that is the limit of the concave problems
as happens for the usual eigenvalue problem for the $p-$Laplacian for a fixed $1<p<\infty$.
\tableofcontents
\end{abstract}	
\maketitle


\section{Introduction}\label{Intro}
Let $\Omega \subset \R^n$ be a domain, i.e., a connected, bounded and open set, with smooth boundary,
$1<p< \infty$ and $\Delta_p u \defeq \div(|\nabla u|^{p-2}\nabla u)$ (the $p$-Laplace operator). It is a well-known fact in the literature (cf. \cite{Anane} and \cite{Lindq90}) that the first eigenvalue of the following $p$-homogeneous (nonlinear) eigenvalue problem
\begin{equation}\label{eq.p}
\left\{
\begin{array}{rclcl}
  -\Delta_p u & = & \lam  |u|^{p-2} u & \text{in} & \Omega \\
  u & = & 0 & \text{on} & \partial \Omega
\end{array}
\right.
\end{equation}
can be characterized variationally as the minimizer of the Rayleigh quotient
\begin{equation}\tag{{\bf \text{p-Eigenvalue}}}\label{1er.p}
   \displaystyle \lambda_{1, p}(\Omega) \defeq \inf_{u \in W^{1,p}_0 (\Omega) \setminus \{0\}}
   \frac{\displaystyle \int_{\Omega}| \nabla u|^p dx}{ \displaystyle \int_{\Omega}| u|^p dx}>0.
\end{equation}
Eigenvalue problems have received an increasing amount of attention along of last decades by many authors (being studied mainly via variational methods) due to several connections with applied sciences, such as bifurcation theory, resonance problems, fluid and quantum mechanics, etc, (cf. \cite{GP1}, \cite{Le}, \cite{Lindq90}, as well as the book \cite{KRV}).

Notice that, without loss of generality, due to the weak maximum principle (or Harnack inequality) the eigenfunction $u_{p}$ corresponding to $\lam_{1, p}(\Omega)$ can be considered to be positive in $\Omega$, as well as, due to the $p$-homogeneity of \eqref{eq.p}, normalized such that $\displaystyle \|u_{p}\|_{L^p(\Omega)}=1$ (cf. \cite{KL}). 
Recall that the minimum in \eqref{1er.p} is achieved by the unique positive solution (up to multiplicative constants) of equation \eqref{eq.p} with $\lam(\Omega) = \lam_{1, p}(\Omega)$ (cf. \cite{KL}). This fact is known as the \emph{simplicity} of the principal eigenvalue of \eqref{eq.p} (cf. \cite{Anane} and \cite{FranLamb}).

When one takes the limit as $ p\to \infty$ in the minimization problem \eqref{1er.p} obtains
\begin{equation} \tag{{\bf \text{$\infty$-Eigenvalue}}}\label{lam.inf}
  \lam_{1,\infty}(\Omega) \defeq \lim_{p\to\infty} \sqrt[p]{\lam_{1, p}(\Omega)} =\inf_{u \in W^{1, \infty}_0(\Omega)\setminus\{0\}} \frac{\|\nabla u\|_{L^{\infty}(\Omega)}}{\|u\|_{L^{\infty}(\Omega)}}.
\end{equation}
This min-max problem presents too many solutions as it was shown in the celebrated paper \cite{JLM} (see also \cite{JLM1}). Concerning the limit equation, also in \cite{JLM} it is
proved that any family of normalized eigenfunctions $\{u_p\}_{p>1}$ to \eqref{1er.p} fulfills (up to subsequence)
$$
	u_{p}(x) \to u_\infty(x) \quad \text{uniformly in }\overline{\Omega} \quad \text{as}\,\,\, p \to \infty,
$$
where $u_\infty \in W^{1,\infty}_0 (\Omega)$ satisfies $\|u_\infty\|_{L^{\infty}(\Omega)}=1$ and
the pair $(u_\infty, \lam_{1, \infty}(\Omega))$ is a nontrivial solution to
\begin{equation}\label{eq.infty.p}
\left\{
\begin{array}{rclcl}
  \min\Big\{-\Delta_\infty v_\infty, |\nabla v_\infty|-\lam_{1,\infty}(\Omega)v_\infty\Big\} & = & 0 & \text{in} & \Omega \\
  v_\infty & = & 0 & \text{on} & \partial \Omega.
\end{array}
\right.
\end{equation}
Here solutions are understood in the viscosity sense and
$$
   \displaystyle \Delta_\infty u(x) \defeq \sum_{i, j=1}^{n} \frac{\partial u}{\partial x_j}(x)\frac{\partial^2 u}{\partial x_j \partial x_i}(x) \frac{\partial u}{\partial x_i}(x)
$$
is the well-known \textit{$\infty-$Laplace operator}. For this reason, \eqref{eq.infty.p}, its (positive) solutions and $\lam_{1,\infty}(\Omega)$ are called in the literature the \textit{$\infty$-eigenvalue problem}, \textit{$\infty-$ground states} and the \textit{first $\infty$-eigenvalue}
respectively (cf. \cite{HSY}, \cite{JLM}, \cite{JLM1} and \cite{Yu}). In addition, also in \cite{JLM}, it is given a geometrical characterization for $\lam_{1,\infty}(\Omega)$, namely,
$$
	\lam_{1,\infty}(\Omega) = \frac{1}{\mathfrak{R}}
$$
where $\mathfrak{R}>0$ is the radius of the biggest ball contained inside $\Omega$. This means that the ``principal frequency'' for the $\infty$-eigenvalue problem can be detected from the geometry of the domain.
For more references concerning the first eigenvalue for the $\infty-$eigenvalue problem we refer to \cite{Champion}, \cite{Crasta}, \cite{Ju-Li-05}, \cite{KH}, \cite{Lin}, \cite{NRSanAS} and \cite{Yu}.

In contrast with the first (zero) Dirichlet $p$-Laplace eigenfunction (cf. \cite{BK1}, \cite{FranLamb} and \cite{GP1}), problem \eqref{eq.infty.p} may have  many solutions. In fact, the simplicity of $\lam_{1, \infty}(\Omega)$ has only been established for those domains in which the distance function $u(x) = \dist(x,\partial \Omega)$ is an eigenfunction, see \cite{Yu}. Such domains include the ball, the stadium (the convex hull of two balls with the same radii) and the torus as particular examples. Here we mention that from \cite{Lin} we know that there are convex domains for which the distance function is not 
an $\infty-$eigenfunction. Nevertheless, in general domains, one cannot expect a simple first eigenvalue, since in \cite{HSY} the authors show an example of a planar domain with a dumbbell shape (two balls of the same size with a small bridge connecting them) containing (at least) three different eigenfunctions (all of them normalized by $\|u_\infty\|_{L^{\infty}(\Omega)}=1$). We also highlight that such an example solves a conjecture posed by \cite{JLM} and \cite{JLM1}.

Taking into account the fact that, in general, $\lam_{1, \infty}(\Omega)$ is not simple, the main purpose of this paper is to prove that, in spite of this lack of simplicity, there exists a {\it unique} distinguished eigenfunction $\widehat{v}$ corresponding to $\lam_{1, \infty}(\Omega)$ that arises as the limit of sub-linear (concave) problems associated to the $\infty$-eigenvalue problem. This distinguished eigenfunction $\widehat{v}$ is characterized as the only one that fulfills a maximality property: it is normalized with $\|\widehat{v}\|_{L^{\infty}(\Omega)}=1$, it verifies $\widehat{v}\geq u_\infty$ for any other solution to \eqref{eq.infty.p} with $\|u_\infty\|_{L^{\infty}(\Omega)}=1$.

Now we take a small detour and introduce for $1<q<p$ the following family of eigenvalue problems
\begin{equation}\label{eq1.1}
\left\{\begin{array}{rcll}
  -\Delta_p u & = & \Lam_{p, q}(\Omega)\|u\|_{L^q(\Omega)}^{p-q}|u|^{q-2} u & \text{ in } \Omega \\
  u & = & 0 & \text{ on }\partial\Omega,
\end{array}
\right.
\end{equation}
see \cite{BF, FranLamb}. The first eigenvalue for this problem is given by the following quantity
\begin{equation} \label{vari.pq}
	\Lam_{p, q}(\Omega) \defeq  \inf\left\{ \int_\Omega |\nabla u|^p\,dx \colon u\in W^{1, p}_0(\Omega)\,\,\,\text{with}\,\,\, \|u\|_{L^q(\Omega)}=1\right\}.
\end{equation}
As before, we will consider the corresponding eigenfunction $u_{p,q}$ being positive in $\Omega$ and normalized such that $\|u_{p, q}\|_{L^q(\Omega)}=1$.

Our first result shows that the value $\lam_{1,\infty}(\Omega)$ defined in \eqref{lam.inf} can be also obtained as the limit of the eigenvalues $\Lam_{p,q}(\Omega)$ as $p,q\to \infty$.
\begin{thm}\label{teo.1}
Let $\Omega \subset \R^n$ be a bounded domain. Then,
\begin{equation}\label{Eqlimcon}
    \displaystyle \lim_{p,q\to\infty} \sqrt[p]{\Lambda_{p, q}(\Omega)} = \lam_{1, \infty}(\Omega),
\end{equation}
where $\lam_{1, \infty}(\Omega)$ is the quantity given by \eqref{lam.inf}.
\end{thm}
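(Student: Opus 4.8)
The plan is to first put the quantity into a scale-invariant form. By replacing any admissible $u$ by $u/\|u\|_{L^q(\Omega)}$ in \eqref{vari.pq} one sees that $\Lambda_{p,q}(\Omega)=\inf_u \int_\Omega|\nabla u|^p\,dx\,/\,\|u\|_{L^q(\Omega)}^p$, so that taking $p$-th roots gives
\[
\sqrt[p]{\Lambda_{p,q}(\Omega)} = \inf_{u\in W^{1,p}_0(\Omega)\setminus\{0\}} \frac{\|\nabla u\|_{L^p(\Omega)}}{\|u\|_{L^q(\Omega)}}.
\]
Thus the theorem reduces to showing that this Rayleigh-type quotient converges to the $\infty$-Rayleigh quotient infimum \eqref{lam.inf}. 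I would prove the two matching inequalities $\limsup\le\lambda_{1,\infty}(\Omega)$ and $\liminf\ge\lambda_{1,\infty}(\Omega)$ separately, and then conclude by a subsequence argument: every sequence $(p_j,q_j)\to(\infty,\infty)$ with $q_j<p_j$ admits a sub-subsequence along which the quantity converges, and both bounds force that limit to equal $\lambda_{1,\infty}(\Omega)$.

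For the upper bound I would fix an arbitrary competitor $u\in W^{1,\infty}_0(\Omega)\setminus\{0\}$ (a Lipschitz function vanishing on $\partial\Omega$), admissible for every $p$ since $\Omega$ is bounded. Testing with $u$ gives $\sqrt[p]{\Lambda_{p,q}(\Omega)}\le\|\nabla u\|_{L^p(\Omega)}/\|u\|_{L^q(\Omega)}$, and letting $p,q\to\infty$ with the elementary facts $\|\nabla u\|_{L^p(\Omega)}\to\|\nabla u\|_{L^\infty(\Omega)}$ and $\|u\|_{L^q(\Omega)}\to\|u\|_{L^\infty(\Omega)}$ (valid because $|\Omega|<\infty$ and $\nabla u\in L^\infty$) yields $\limsup_{p,q\to\infty}\sqrt[p]{\Lambda_{p,q}(\Omega)}\le\|\nabla u\|_{L^\infty(\Omega)}/\|u\|_{L^\infty(\Omega)}$. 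Taking the infimum over all such $u$ produces $\limsup_{p,q\to\infty}\sqrt[p]{\Lambda_{p,q}(\Omega)}\le\lambda_{1,\infty}(\Omega)$.

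For the lower bound I would work with the normalized minimizers $u_{p,q}$ (which exist by the direct method, since $q<p$ makes the embedding $W^{1,p}_0(\Omega)\hookrightarrow L^q(\Omega)$ compact), so that $\|u_{p,q}\|_{L^q(\Omega)}=1$ and $\|\nabla u_{p,q}\|_{L^p(\Omega)}=\sqrt[p]{\Lambda_{p,q}(\Omega)}$. Restricting to a sequence $(p_j,q_j)\to(\infty,\infty)$ realizing the liminf $L$, I would use H\"older's inequality to obtain, for each fixed $m$ and all large $j$, the uniform estimate $\|\nabla u_{p_j,q_j}\|_{L^m(\Omega)}\le|\Omega|^{1/m-1/p_j}\|\nabla u_{p_j,q_j}\|_{L^{p_j}(\Omega)}$, whose right-hand side stays bounded because the upper bound already shows $\sqrt[p]{\Lambda_{p,q}(\Omega)}$ is bounded. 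Hence $\{u_{p_j,q_j}\}$ is bounded in $W^{1,m}_0(\Omega)$ for every $m$; a diagonal extraction combined with the compact embedding $W^{1,m}_0(\Omega)\hookrightarrow C(\overline{\Omega})$ for $m>n$ produces a subsequence converging uniformly, and weakly in each $W^{1,m}_0(\Omega)$, to some $u_\infty\in W^{1,\infty}_0(\Omega)$. By weak lower semicontinuity, $\|\nabla u_\infty\|_{L^m(\Omega)}\le|\Omega|^{1/m}L$ for every $m$, and letting $m\to\infty$ gives $\|\nabla u_\infty\|_{L^\infty(\Omega)}\le L$. For the denominator, the normalization $1=\|u_{p_j,q_j}\|_{L^{q_j}(\Omega)}\le|\Omega|^{1/q_j}\|u_{p_j,q_j}\|_{L^\infty(\Omega)}$ together with the uniform convergence forces $\|u_\infty\|_{L^\infty(\Omega)}\ge 1$, so in particular $u_\infty\neq 0$. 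Therefore $\lambda_{1,\infty}(\Omega)\le\|\nabla u_\infty\|_{L^\infty(\Omega)}/\|u_\infty\|_{L^\infty(\Omega)}\le L$.

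The main obstacle I anticipate is the simultaneous control of two independently diverging exponents: the numerator requires a single limit function that is admissible in every $W^{1,m}_0(\Omega)$ at once (handled by the diagonal argument and Morrey-type compactness for $m>n$), while the denominator carries a moving $L^{q}$ normalization that must still yield a nontrivial limit with the correct sup-norm lower bound. Once both the $\limsup$ and $\liminf$ inequalities are established, they pin down every subsequential limit to $\lambda_{1,\infty}(\Omega)$, which proves \eqref{Eqlimcon}.
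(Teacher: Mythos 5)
Your proof is correct and follows essentially the same strategy as the paper: an upper bound obtained by testing the Rayleigh quotient with Lipschitz competitors, and a lower bound obtained by extracting a uniform and weak limit of the normalized eigenfunctions via H\"older's inequality and Morrey compactness, then combining weak lower semicontinuity with the variational characterization of $\lambda_{1,\infty}(\Omega)$. The only cosmetic differences are that the paper tests with the distance function (using that it realizes the $\infty$-Rayleigh quotient) instead of taking an infimum over arbitrary competitors, and that you make explicit the nondegeneracy $\|u_\infty\|_{L^\infty(\Omega)}\ge 1$ of the limit, which the paper leaves implicit.
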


In the previous result, the arguments leading to $\lam_{1, \infty}(\Omega)$ do not require any additional assumption on the divergence rates of $p$ and $q$. However, if it is imposed that $\displaystyle \frac{q}{p} \approx \ell < 1$ as $p,q\to\infty$, then we can obtain more information in this limit procedure. The following result shows that eigenfunctions $u_{p, q}$ to \eqref{eq1.1} converge uniformly to a limit function $v_\ell \in W_0^{1,\infty}(\Omega)$. Moreover, $\lam_{1, \infty}(\Omega)$ is, in fact, an ``eigenvalue'' of a certain concave eigenvalue problem.
\begin{thm}\label{teo.2}
Let $q<p$ be such that 
$$
  \displaystyle \ell \defeq \lim_{p, q \to \infty} \frac{q}{p}<1.
$$
Then, for any sequence of eigenfunctions $\{u_{p,q}\}_{p,q}$ to \eqref{eq1.1} normalized such that $\|u_{p, q}\|_{L^q(\Omega)}=1$, there exists a limit $v_{\ell}$ (up to a subsequence),
$$
	 \lim_{p, q \to \infty} u_{p, q}(x) =v_{\ell}(x)    \quad \text{ uniformly in } \overline{\Omega},
$$
which is a viscosity solution to
\begin{equation}\label{eq.infty.pq}
\left\{\begin{array}{rcll}
  \min \Big\{ -\Delta_\infty v_{\ell},\, |\nabla v_{\ell}|-\lambda_{1, \infty}(\Omega)   v_{\ell}^{\ell} \Big\} & = & 0 & \text{ in } \Omega \\
  v_{\ell} & = & 0 & \text{ on }\partial\Omega,
\end{array}
\right.
\end{equation}
with $\|v_\ell\|_{L^\infty(\Omega)}=1.$
\end{thm}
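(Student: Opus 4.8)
The plan is to pass to the limit $p,q\to\infty$ in the family $\{u_{p,q}\}$, first extracting a uniform limit by compactness, then fixing its normalization, and finally identifying the limit PDE in the viscosity sense.

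First I would establish compactness. Since $\|u_{p,q}\|_{L^q(\Omega)}=1$ and $\int_\Omega|\nabla u_{p,q}|^p\,dx=\Lambda_{p,q}(\Omega)$, H\"older's inequality gives, for every fixed $m<p$,
$$
\|\nabla u_{p,q}\|_{L^m(\Omega)}\le |\Omega|^{\frac1m-\frac1p}\,\Lambda_{p,q}(\Omega)^{1/p}.
$$
By Theorem \ref{teo.1} the right-hand side stays bounded as $p,q\to\infty$, with $\limsup\le|\Omega|^{1/m}\lambda_{1,\infty}(\Omega)$, so $\{u_{p,q}\}$ is bounded in $W^{1,m}_0(\Omega)$ for each fixed $m>n$. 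Morrey's embedding then yields a uniform $C^{0,1-n/m}(\overline\Omega)$ bound, and Arzel\`a--Ascoli together with a diagonal argument produces a subsequence converging uniformly on $\overline\Omega$ to a function $v_\ell$ which, letting $m\to\infty$, is Lipschitz, nonnegative, and vanishes on $\partial\Omega$; thus $v_\ell\in W^{1,\infty}_0(\Omega)$.

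Next I would check the normalization. Using $\|u_{p,q}-v_\ell\|_{L^\infty(\Omega)}\to0$ and positivity of $u_{p,q}$, for every $\ve>0$ one has $(v_\ell-\ve)_+\le u_{p,q}\le v_\ell+\ve$ for $p,q$ large, hence
$$
\|(v_\ell-\ve)_+\|_{L^q(\Omega)}\le \|u_{p,q}\|_{L^q(\Omega)}=1\le \|v_\ell+\ve\|_{L^q(\Omega)}.
$$
Letting $q\to\infty$, so that $\|\cdot\|_{L^q(\Omega)}\to\|\cdot\|_{L^\infty(\Omega)}$ for the fixed functions involved, and then $\ve\to0$, yields $\|v_\ell\|_{L^\infty(\Omega)}=1$.

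The heart of the proof is the viscosity identification. Recall that the weak solution of $-\Delta_p u_{p,q}=\Lambda_{p,q}(\Omega)\,u_{p,q}^{q-1}$ is also a viscosity solution, and that $\Delta_p\phi=|\nabla\phi|^{p-2}\Delta\phi+(p-2)|\nabla\phi|^{p-4}\Delta_\infty\phi$. After a standard strict-touching reduction, let $\phi\in C^2$ touch $v_\ell$ from below at $x_0\in\Omega$ with $\nabla\phi(x_0)\neq0$; by uniform convergence $u_{p,q}-\phi$ has a local minimum at points $x_{p,q}\to x_0$, and the supersolution inequality for $u_{p,q}$, after dividing by $(p-2)|\nabla\phi(x_{p,q})|^{p-4}>0$, reads
$$
-\Delta_\infty\phi(x_{p,q})-\frac{|\nabla\phi(x_{p,q})|^2}{p-2}\Delta\phi(x_{p,q})\ge \frac{|\nabla\phi(x_{p,q})|^4}{(p-2)\,u_{p,q}(x_{p,q})}\left(\frac{\Lambda_{p,q}(\Omega)^{1/p}\,u_{p,q}(x_{p,q})^{q/p}}{|\nabla\phi(x_{p,q})|}\right)^{p}.
$$
Since the right-hand side is nonnegative, letting $p,q\to\infty$ gives $-\Delta_\infty\phi(x_0)\ge0$. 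Moreover, if one had $|\nabla\phi(x_0)|<\lambda_{1,\infty}(\Omega)\,v_\ell(x_0)^\ell$, which forces $v_\ell(x_0)>0$, then by Theorem \ref{teo.1} the base of the $p$-th power would converge to a number strictly larger than $1$, so the right-hand side would blow up exponentially in $p$ while the left-hand side stays bounded, a contradiction. Hence $|\nabla\phi(x_0)|\ge\lambda_{1,\infty}(\Omega)\,v_\ell(x_0)^\ell$ and $v_\ell$ is a viscosity supersolution of \eqref{eq.infty.pq}. The subsolution property is symmetric: for $\phi$ touching $v_\ell$ from above at $x_0$ with $\nabla\phi(x_0)\neq0$, if $|\nabla\phi(x_0)|>\lambda_{1,\infty}(\Omega)\,v_\ell(x_0)^\ell$ then the base is eventually $<1$, the right-hand side of the reversed inequality tends to $0$, and we conclude $-\Delta_\infty\phi(x_0)\le0$, so $\min\{-\Delta_\infty\phi,|\nabla\phi|-\lambda_{1,\infty}(\Omega)v_\ell^\ell\}(x_0)\le0$.

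The main obstacle is precisely this last limit passage: unlike a routine stability argument, the $p$-Laplacian degenerates and the reaction term is raised to the $p$-th power, so the min-structure of \eqref{eq.infty.pq} emerges only from the competition between the exponential behavior of $\big(\lambda_{1,\infty}(\Omega)v_\ell^\ell/|\nabla\phi|\big)^p$ and the bounded (after rescaling) $\infty$-Laplacian term; the power $v_\ell^\ell$ in the limit is exactly what records $q/p\to\ell$, which is why Theorem \ref{teo.1} and the hypothesis $\ell<1$ enter here. Finally, the case $\nabla\phi(x_0)=0$ is treated by the usual conventions for the $\infty$-Laplacian — for subsolutions the constraint $|\nabla\phi|-\lambda_{1,\infty}(\Omega)v_\ell^\ell\le0$ holds automatically there — as in \cite{JLM}.
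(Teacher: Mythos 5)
Your proof is correct and follows essentially the same route as the paper's: uniform $W_0^{1,m}$-bounds plus Morrey and Arzel\`a--Ascoli for compactness (the paper's Lemma \ref{Lemma2.4} and the lemma following it), then passage to the limit in the viscosity formulation via Lemma \ref{EquivSols}, dividing at the touching points by $(p-2)|\nabla\phi(x_{p,q})|^{p-4}$ and playing the bounded $\infty$-Laplacian terms against the $p$-th power whose base tends to $\lambda_{1,\infty}(\Omega)v_\ell(x_0)^{\ell}/|\nabla\phi(x_0)|$ by Theorem \ref{teo.1}. Two points where you are in fact more careful than the published argument: you verify the normalization $\|v_\ell\|_{L^\infty(\Omega)}=1$, which the theorem asserts but the paper's proof never addresses, and your sub-/supersolution targets have the correct logical form (``min $\le 0$'', i.e.\ at least one inequality, for subsolutions; both inequalities for supersolutions) required by Definition \ref{DefVSlimeq}, whereas the paper's displays \eqref{eq3.2} and \eqref{eq3.3} interchange the ``and''/``or''.
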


After proving this result we turn our attention to the behavior of $\{v_{\ell}\}_{\ell\in (0, 1)}$ as $\ell \nearrow 1$. We show that such a family is decreasing with $\ell$, and then there exists a limit function $ \widehat{v}$ as $\ell\nearrow 1$. As we have anticipated, this limit function $ \widehat{v}$ has some interesting properties:
\begin{enumerate}
  \item[\checkmark] $ \widehat{v}$ is a normalized eigenfunction for the $\infty-$eigenvalue problem with eigenvalue $\lam_{1,\infty}(\Omega)$;
  \item[\checkmark] $ \widehat{v}$ is maximal in the sense of being greater or equal than any other normalized solution to \eqref{eq.infty.p}.
\end{enumerate}

Our last and main result reads as follows:
\begin{thm} \label{teo.3}
Let $v_{\ell}$ be an eigenfunction of \eqref{eq.infty.pq} with corresponding ``eigenvalue'' $\lam_{1, \infty}(\Omega)$ and $\|v_\ell\|_{L^\infty(\Omega)}=1$. Then, there exists a limit function $\widehat{v} \in W_0^{1,\infty}\cap C(\bar \Omega)$,
$$
	\lim_{\ell\nearrow 1} v_{\ell}(x) =  \widehat{v}(x) \quad \text{uniformly in} \,\,\, \overline{\Omega},
$$
such that $ \widehat{v}$ is an eigenfunction for the $\infty-$eigenvalue problem normalized with $\|\widehat{v}\|_{L^\infty(\Omega)}=1.$ Furthermore, $\widehat{v}$ is the maximal solution to \eqref{eq.infty.p} in the following sense:
$$
	\widehat{v} \geq u_\infty\quad \text{ for any other solution } u_\infty \text{ of } \eqref{eq.infty.p} \text{ with
	$\| u_\infty
\|_{L^\infty(\Omega)}=1.$
}
$$
\end{thm}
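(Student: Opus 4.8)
The plan is to derive the entire statement from a single analytic tool: a \emph{comparison principle} for the concave problem \eqref{eq.infty.pq}. The sublinear zeroth-order term $v\mapsto \lambda_{1,\infty}(\Omega)\,v^{\ell}$ is strictly increasing and strictly concave for $\ell<1$, and, exactly as for sublinear reaction problems, one expects that a viscosity subsolution lying below a supersolution on $\partial\Omega$ must remain below it throughout $\Omega$. I will treat this comparison principle as the backbone of the argument; making it rigorous is in fact the main obstacle (discussed last). Granting it, I would \emph{first} establish monotonicity of the family in $\ell$. Fix $\ell_1<\ell_2<1$. Since $v_{\ell_2}$ solves the $\ell_2$-problem it is a subsolution of it, and because $0\le v_{\ell_2}\le 1$ (positivity plus the normalization) together with $\ell_1<\ell_2$ gives $v_{\ell_2}^{\ell_1}\ge v_{\ell_2}^{\ell_2}$, one verifies on test functions that $v_{\ell_2}$ is a subsolution of the $\ell_1$-problem: decreasing the second argument of the minimum cannot increase it, so $\min\{-\Delta_\infty\phi,\,|\nabla\phi|-\lambda_{1,\infty}(\Omega)v_{\ell_2}^{\ell_1}\}\le\min\{-\Delta_\infty\phi,\,|\nabla\phi|-\lambda_{1,\infty}(\Omega)v_{\ell_2}^{\ell_2}\}\le 0$ at any $\phi$ touching from above. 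As $v_{\ell_1}$ is a supersolution of the $\ell_1$-problem and both functions vanish on $\partial\Omega$, comparison yields $v_{\ell_2}\le v_{\ell_1}$. Hence $\ell\mapsto v_{\ell}$ is pointwise nonincreasing.

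Next I would extract the uniform limit. The family $\{v_\ell\}$ carries a uniform Lipschitz bound $\Lip(v_\ell)\le \lambda_{1,\infty}(\Omega)$, inherited from the gradient estimates $\|\nabla u_{p,q}\|_{L^p(\Omega)}=\Lambda_{p,q}(\Omega)^{1/p}\to\lambda_{1,\infty}(\Omega)$ for the approximating $(p,q)$-eigenfunctions of Theorem \ref{teo.2} and Theorem \ref{teo.1}. Combined with the monotone pointwise limit, Dini's theorem (or Arzel\`a--Ascoli) upgrades the convergence $v_\ell\searrow\widehat v$ to uniform convergence on $\overline\Omega$, with $\widehat v\in W_0^{1,\infty}\cap C(\overline\Omega)$.

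To identify the limit I would invoke the standard stability of viscosity solutions under uniform convergence: as $\ell\nearrow 1$ the operators $\min\{-\Delta_\infty\,\cdot\,,\ |\nabla\,\cdot\,|-\lambda_{1,\infty}(\Omega)(\cdot)^{\ell}\}$ converge locally uniformly to the operator in \eqref{eq.infty.p}, so $\widehat v$ is a viscosity solution of \eqref{eq.infty.p}. Since uniform convergence preserves the sup-norm and $\|v_\ell\|_{L^\infty(\Omega)}=1$, we get $\|\widehat v\|_{L^\infty(\Omega)}=1$ and in particular $\widehat v\not\equiv 0$, so $\widehat v$ is a genuine normalized $\infty$-eigenfunction. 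For \emph{maximality}, let $u_\infty$ be any normalized solution of \eqref{eq.infty.p}. Since $0\le u_\infty\le 1$ and $\ell<1$, one has $u_\infty^{\ell}\ge u_\infty$, whence $|\nabla u_\infty|-\lambda_{1,\infty}(\Omega)u_\infty^{\ell}\le |\nabla u_\infty|-\lambda_{1,\infty}(\Omega)u_\infty$; arguing on test functions exactly as in the monotonicity step shows that $u_\infty$, being a subsolution of \eqref{eq.infty.p}, is a subsolution of the $\ell$-problem \eqref{eq.infty.pq}. Comparing with the solution $v_\ell$ (both vanishing on $\partial\Omega$) gives $u_\infty\le v_\ell$ for every $\ell<1$, and letting $\ell\nearrow 1$ yields $u_\infty\le\widehat v$, which is precisely the claimed maximality.

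The step I expect to be genuinely hard is the comparison principle for the degenerate problem \eqref{eq.infty.pq}. The $\infty$-Laplacian is highly degenerate and the gradient-constraint form $\min\{\cdots\}$ obstructs the usual doubling-of-variables arguments; indeed the failure of comparison at $\ell=1$ is exactly what makes \eqref{eq.infty.p} nonunique, as the dumbbell example shows. The sublinearity $\ell<1$ must therefore be exploited decisively, presumably through a dilation argument comparing $(1+\delta)v_\ell$ against the subsolution and using the strict monotonicity that $t\mapsto t^{\ell}$ provides in the zeroth-order term to absorb the error as $\delta\to 0$. This is where the concavity of the problem does the essential work, and it is the only place in the plan that does not reduce to routine viscosity-solution bookkeeping.
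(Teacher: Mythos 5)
Your proposal is correct and follows essentially the same route as the paper: monotonicity of $\ell\mapsto v_\ell$ via comparison (you make $v_{\ell_2}$ a subsolution of the $\ell_1$-problem, the paper equivalently makes $v_{\ell_1}$ a supersolution of the $\ell_2$-problem), uniform convergence of the decreasing family, viscosity stability to identify the limit equation, and maximality by showing every normalized solution of \eqref{eq.infty.p} is a subsolution of the sublinear problem \eqref{eq.infty.pq}. The one step you flagged as the genuine obstacle --- the comparison principle for \eqref{eq.infty.pq} --- is not proved in the paper either: it is invoked as Theorem \ref{CompPrin}, quoted from \cite[Theorem 10]{CP}, so your plan coincides with the paper's argument once that citation is in hand (and your guess at its proof mechanism, exploiting the sublinearity $\ell<1$ through a scaling/dilation argument, is indeed how it is established there).
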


{\bf Conjecture:} We conjecture that the maximal solution is the unique variational eigenfunction for the $\infty-$eigenvalue problem, that is, the whole family of normalized eigenfunctions $\{u_p\}_{p>1}$ to \eqref{1er.p} converges to $\widehat{v}$,
$$
	u_{p}(x) \to \widehat{v}(x) \quad \text{uniformly in }\overline{\Omega} \quad \text{as}\,\,\, p \to \infty.
$$
Notice that this holds trivially when one has simplicity of the first eigenvalue $\lam_{1, \infty}(\Omega)$
(this happens in a ball, a stadium and other domains) but it also holds for the counterexample to simplicity presented in \cite{HSY} where the maximal solution is also the limit of the $\{u_p\}_{p>1}$ (this is due to symmetry reasons).

\begin{remark}
One could guess the existence of a result similar to Theorem \ref{teo.3} regarding minimal solutions to \eqref{eq.infty.p}. Nevertheless, in a general context, such a minimal solution could not exist as illustrates the example presented in \cite{HSY}.
\end{remark}

\begin{remark}
Problem \eqref{eq.infty.pq} also arises as limit when $p,q \to \infty$ of the concave problems of $p-$Laplacian type
\begin{equation}\label{eq.p.qq}
\left\{
\begin{array}{rclcl}
  -\Delta_p u & = & \lam  u^{q-1}  & \text{in} & \Omega \\
  u & = & 0 & \text{on} & \partial \Omega
\end{array}
\right.
\end{equation}
with $q<p$, see \cite{CP}. This problem has a unique positive solution for every $\lambda>0$, see \cite{BK1}.

In addition, it holds that the solution $u_q$ to \eqref{eq.p.qq} that verifies $\| u_q\|_{L^\infty (\Omega)} =1$ (that exists for some value $\lambda= \lambda_q$) converges as $q\nearrow p$ ($p$ fixed) to an eigenfunction of the $p-$Laplacian (normalized with $\| u\|_{L^\infty (\Omega)} =1$), notice that we have $\lambda_q\to \lambda_{1,p}(\Omega)$ as $q\nearrow p$. Therefore, our main result, Theorem \ref{teo.3}, can be regarded as an extension to this approximation of an eigenvalue problem by sub-linear problems to the case $p=\infty$.
\end{remark}

\section{Preliminaries}

Throughout this section we will introduce some definitions and auxiliary results  we will use in this paper.
The material presented here is well-known to experts but we include some details for completeness. 

First of all, we present the notion of weak solution to
\begin{equation} \label{plap.g}
 - \Delta_p u = g_u(u) \quad \text{in} \quad \Omega,
\end{equation}
where $g_u: \R \to \R$ is the continuous function defined by
$$
   g_u(s) = \Lam_{p, q}(\Omega)\|u\|_{L^q(\Omega)}^{p-q}|s|^{q-2}s.
$$
Hereafter, since we are interested in the asymptotic behavior as $p, q \to \infty $, without loss of generality we can assume that $p >q \geq \max\{2,n\}$.

\begin{definition}\label{DefWS}
  A function $u \in W^{1,p}(\Omega) \cap C(\Omega)$ is said to be a weak solution to \eqref{eq1.1} if it fulfills
  $$
  \displaystyle \int_{\Omega} |\nabla u|^{p-2}\nabla u \cdot \nabla \phi\, dx = \int_{\Omega} g_u(u)\phi \,dx, \qquad \forall \phi \in C^{\infty}_0(\Omega).
  $$
\end{definition}
Since $p$ is large, then \eqref{eq1.1} is not singular at points where the gradient vanishes. Consequently, the mapping
$$
   x \mapsto \Delta_p \phi(x) = |\nabla \phi(x)|^{p-2}\Delta \phi(x) + (p-2)|\nabla \phi(x)|^{p-4}\Delta_{\infty} \phi(x)
$$
is well-defined, as well as it is continuous for all $\phi \in C^2(\Omega)$.

Next, we introduce the notion of viscosity solution to \eqref{eq1.1}. We refer the survey \cite{CIL} for the general theory of viscosity solutions.

\begin{definition}
  An upper (resp. lower) semi-continuous function $u: \Omega \to \R$ is said to be a viscosity sub-solution (resp. super-solution) to \eqref{eq1.1} if, whenever $x_0 \in \Omega$ and $\phi \in C^2(\Omega)$ are such that $u-\phi$ has a strict local maximum (resp. minimum) at $x_0$, then
$$
    -\Delta_p \phi(x_0) \geq g_u(\phi(x_0)) \quad (\text{resp.} \,\,\,\leq g_u(\phi(x_0))).
$$
Finally, a $u \in C(\Omega)$ is said to be a viscosity solution to \eqref{eq1.1} if it is
simultaneously a viscosity sub-solution and a viscosity super-solution.
\end{definition}

\begin{definition}\label{DefVSlimeq} A non-negative function $u \in C(\Omega)$ is said to be a viscosity solution to \eqref{eq.infty.pq} if:
\begin{enumerate}
  \item whenever $x_0 \in \Omega$ and $\phi \in C^2(\Omega)$ are such that $u(x_0) = \phi(x_0)$ and
$u(x)<\phi(x)$, when $x \neq x_0$, then
$$
   -\Delta_{\infty} \phi(x_0) \leq 0 \quad \text{or} \quad |\nabla \phi(x_0)|-\lam_{1, \infty}(\Omega)\phi^{\ell}(x_0) \leq 0.
$$
  \item whenever $x_0 \in \Omega$ and $\phi \in C^2(\Omega)$ are such that $u(x_0) = \phi(x_0)$ and
$u(x)>\phi(x)$, when $x \neq x_0$, then
$$
   -\Delta_{\infty} \phi(x_0) \geq 0 \quad \text{and} \quad |\nabla \phi(x_0)|-\lam_{1, \infty}(\Omega)\phi^{\ell}(x_0) \geq 0.
$$
\end{enumerate}
\end{definition}

The following lemmas will be used below.

\begin{lemma}\label{Lemma2.4} Assume $n<p < \infty$ and let $u \in W^{1, p}_0(\Omega)$ be a weak solution to \eqref{eq1.1}. Then $u \in C^{0, \alpha}(\Omega)$, where $\alpha = 1- \frac{n}{p}$. Moreover, the following holds
\begin{enumerate}
  \item $L^{\infty}$-bounds
  $$
  \|u\|_{L^{\infty}(\Omega)} \leq \mathfrak{C}_1,
  $$
  \item H\"{o}lder estimate
  $$
  \frac{|u(x)-u(y)|}{|x-y|^{\alpha}} \leq \mathfrak{C}_2,
  $$
 \end{enumerate}
  where $\mathfrak{C}_1$ and $\mathfrak{C}_2$ are constants depending on $n$, $\sqrt[p]{\Lambda_{p,q}(\Omega)}$ and $\|u\|_{L^q(\Omega)}$.
\end{lemma}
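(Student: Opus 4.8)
The plan is to reduce the whole statement to two classical ingredients: an energy identity obtained by testing the weak formulation against $u$ itself, and Morrey's embedding, which is available precisely because we work in the regime $p>n$. The energy identity will pin down $\|\nabla u\|_{L^p(\Omega)}$ in terms of exactly the two quantities $\sqrt[p]{\Lam_{p,q}(\Omega)}$ and $\|u\|_{L^q(\Omega)}$ allowed in the statement; Morrey's inequality will then convert this gradient control into the H\"older seminorm bound (item (2)); and finally the homogeneous boundary condition $u|_{\partial\Omega}=0$ will upgrade the seminorm bound to the $L^\infty$ bound (item (1)) for free.

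First I would establish the energy identity. Since $C^\infty_0(\Omega)$ is dense in $W^{1,p}_0(\Omega)$ and $u\in W^{1,p}_0(\Omega)$, the choice $\phi=u$ is admissible in Definition \ref{DefWS}: approximating $u$ by $\phi_k\to u$ in $W^{1,p}_0(\Omega)$, the left-hand side converges because $|\nabla u|^{p-2}\nabla u\in L^{p'}(\Omega)$, while the right-hand side converges because $g_u(u)$ is a fixed function lying in $L^{q'}(\Omega)$ and $\phi_k\to u$ in $L^q(\Omega)$. Testing with $\phi=u$ and using $\int_\Omega|u|^q\,dx=\|u\|_{L^q(\Omega)}^q$ together with the explicit form of $g_u$ gives
\[
\int_\Omega |\nabla u|^p\,dx = \Lam_{p,q}(\Omega)\,\|u\|_{L^q(\Omega)}^{p-q}\int_\Omega |u|^{q}\,dx = \Lam_{p,q}(\Omega)\,\|u\|_{L^q(\Omega)}^{p},
\]
and hence, after taking $p$-th roots,
\[
\|\nabla u\|_{L^p(\Omega)} = \sqrt[p]{\Lam_{p,q}(\Omega)}\;\|u\|_{L^q(\Omega)}.
\]
This is the crucial reduction: the full Dirichlet energy of $u$ is now controlled by precisely the two quantities named in the statement.

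Next I would invoke Morrey's inequality. For $p>n$ one has the continuous embedding $W^{1,p}_0(\Omega)\hookrightarrow C^{0,\alpha}(\overline{\Omega})$ with $\alpha=1-\frac{n}{p}$, together with the quantitative seminorm estimate $[u]_{C^{0,\alpha}(\Omega)}\le C(n,p)\,\|\nabla u\|_{L^p(\Omega)}$. Substituting the energy identity yields
\[
[u]_{C^{0,\alpha}(\Omega)} \;\le\; C(n,p)\,\sqrt[p]{\Lam_{p,q}(\Omega)}\;\|u\|_{L^q(\Omega)} \defeq \mathfrak{C}_2,
\]
which is item (2). For item (1), I would exploit the zero boundary values: given $x\in\Omega$, choose $y\in\partial\Omega$ with $|x-y|=\dist(x,\partial\Omega)\le \mathrm{diam}(\Omega)$, so that $|u(x)|=|u(x)-u(y)|\le [u]_{C^{0,\alpha}(\Omega)}\,|x-y|^{\alpha}$. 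Taking the supremum over $x$ gives $\|u\|_{L^\infty(\Omega)}\le (\mathrm{diam}\,\Omega)^{\alpha}\,\mathfrak{C}_2\defeq \mathfrak{C}_1$, with the stated dependence on $n$, $\sqrt[p]{\Lam_{p,q}(\Omega)}$ and $\|u\|_{L^q(\Omega)}$ (the factor $(\mathrm{diam}\,\Omega)^{\alpha}$ being a fixed geometric constant of the domain).

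The argument is essentially routine, so there is no serious obstacle; the only points demanding care are the justification that $u$ is an admissible test function (handled by density together with the a priori finiteness of the Dirichlet energy, which permits passage to the limit in the weak formulation) and the bookkeeping of which variables the Morrey constant depends on. For this lemma it suffices to keep $p$ fixed and let $C(n,p)$ depend on $p$. I would flag, however, that the genuinely delicate version of this bookkeeping is what is needed later: in the asymptotic regime $p,q\to\infty$ one must verify that $C(n,p)$ stays bounded as $p\to\infty$ while $\alpha=1-\frac{n}{p}\to 1$, so that these estimates become equi-H\"older (indeed asymptotically equi-Lipschitz) and thus furnish, via Arzel\`a--Ascoli, the compactness underlying Theorems \ref{teo.1} and \ref{teo.2}.
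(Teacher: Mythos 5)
Your argument is, at its core, the same as the paper's: test the weak formulation of \eqref{eq1.1} with $u$ itself to obtain the energy identity $\|\nabla u\|_{L^p(\Omega)} = \sqrt[p]{\Lambda_{p,q}(\Omega)}\,\|u\|_{L^q(\Omega)}$, then convert this gradient control into pointwise information via Morrey's embedding. The ordering of the two items is reversed (the paper gets the $L^\infty$ bound directly from the embedding $\|u\|_{L^{\infty}(\Omega)} \leq \mathfrak{C}(n,\Omega)\|\nabla u\|_{L^p(\Omega)}$ and treats the H\"older seminorm separately, while you get the seminorm first and deduce the $L^\infty$ bound from the zero boundary values), but that difference is immaterial.

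The one place where you genuinely deviate --- and where your proof, as written, falls short of the statement --- is the bookkeeping of constants. The lemma asserts that $\mathfrak{C}_1$, $\mathfrak{C}_2$ depend only on $n$, $\sqrt[p]{\Lambda_{p,q}(\Omega)}$ and $\|u\|_{L^q(\Omega)}$ (besides the fixed domain $\Omega$); in particular they are uniform in $p$ and $q$. This uniformity is not an optional refinement: it is the entire point of the lemma, which is invoked immediately afterwards to give equiboundedness and equicontinuity of the family $\{u_{p,q}\}$ and hence Arzel\`a--Ascoli compactness as $p,q\to\infty$. Your constant $C(n,p)$ in $[u]_{C^{0,\alpha}} \leq C(n,p)\|\nabla u\|_{L^p(\Omega)}$ is allowed to depend on $p$, and your remark that for this lemma ``it suffices to keep $p$ fixed'' misreads the statement. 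The paper handles this by keeping the Morrey-type constant at a fixed exponent independent of $p$ (it writes $\|\nabla u\|_{L^n(\Omega)}$; read this as a fixed $m>n$, since the critical exponent itself is borderline) and then passing to $L^p$ by H\"older's inequality, which only costs the harmless factor $|\Omega|^{\frac{p-n}{pn}} \leq \max\{1, |\Omega|^{1/n}\}$. Your route is easily repaired --- the explicit Morrey constant at exponent $p$ behaves like $C(n)\tfrac{p-1}{p-n}$, hence stays bounded as $p \to \infty$ --- but this verification must be part of the proof of the lemma, not a flag deferred to the later theorems that rely on it.
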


\begin{proof}
By multiplying \eqref{eq1.1} by $u$ and integrating by parts we obtain
$$
  \int_{\Omega} |\nabla u|^p \, dx  =   \Lambda_{p, q}(\Omega)\|u\|^p_{L^q(\Omega)} \, dx.
$$
Next, by Morrey's estimates and the previous sentence, there exists a positive constant $\mathfrak{C}=\mathfrak{C}(n,\Omega)$ independent on $p$ such that
$$
   \|u\|_{L^{\infty}(\Omega)} \leq \mathfrak{C} \|\nabla u\|_{L^p(\Omega)} \leq \mathfrak{C}
   \sqrt[p]{\Lambda_{p, q}(\Omega)}\|u\|_{L^q(\Omega)},
$$
which proves the first statement.

On the other hand, since $p > n$, combining the H\"{o}lder's inequality and Morrey's estimates we have
$$
   \frac{|u(x)-u(y)|}{|x-y|^{\alpha}} \leq \mathfrak{C} \|\nabla u\|_{L^n(\Omega)} \leq \mathfrak{C}|\Omega|^{\frac{p-n}{pn}}\|\nabla u\|_{L^p(\Omega)}\leq \hat{\mathfrak{C}}|\Omega|^{\frac{p-n}{pn}}\sqrt[p]{\Lambda_{p, q}(\Omega)}\|u\|_{L^q(\Omega)},
$$
where $\hat{\mathfrak{C}}$ depends only on $n$ and $\Omega$.
\end{proof}

The last result gives that any family of weak solutions to \eqref{eq1.1} is pre-compact. Therefore, the existence of a uniform limit for our main theorem is guaranteed.

\begin{lemma} Let $\{u_p\}_{p>1}$ be a sequence of weak solutions to \eqref{eq1.1}. Suppose that
$\sqrt[p]{\Lam_{p, q}(\Omega)}, \|u_{p,q}\|_{L^q(\Omega)} \leq \mathfrak{C}$ for all $1<p< \infty$ . Then, there exists a subsequence $p_i , q_i\to \infty$ and a limit function $u_{\infty}$ such that
$$
   \displaystyle \lim_{p_i,q_i \to \infty} u_{p_i,q_i}(x) = u_{\infty}(x)
$$
uniformly in $\Omega$. Moreover, $u_{\infty}$ is Lipschitz continuous with
$$
   \frac{|u_{\infty}(x)-u_{\infty}(y)|}{|x-y|} \leq \mathfrak{C}\limsup_{p_i,q_i \to \infty} \sqrt[p_i]{\Lambda_{p_i, q_i}(\Omega)}\|u_{p_i,q_i}\|_{L^{q_i}(\Omega)}.
$$
\end{lemma}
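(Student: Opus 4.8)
The plan is to extract the uniform limit by an Arzel\`a--Ascoli argument based on the estimates of Lemma~\ref{Lemma2.4}, and then to recover the Lipschitz bound by freezing an intermediate exponent $m$, passing to the limit $p,q\to\infty$, and finally sending $m\to\infty$.

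First I would prove the uniform convergence. Under the standing hypothesis $\sqrt[p]{\Lambda_{p,q}(\Omega)}\le\mathfrak{C}$ and $\|u_{p,q}\|_{L^q(\Omega)}\le\mathfrak{C}$, part~(1) of Lemma~\ref{Lemma2.4} furnishes an $L^\infty$-bound that is independent of $p$ and $q$. For the equicontinuity, fix the exponent $\tfrac12$; for $p\ge 2n$ the H\"older exponent $\alpha=1-\tfrac n p$ satisfies $\alpha\ge\tfrac12$, and since $|\Omega|^{(p-n)/(pn)}$ stays bounded, part~(2) of Lemma~\ref{Lemma2.4} upgrades to a uniform $C^{0,1/2}(\overline\Omega)$ bound on the whole family. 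Arzel\`a--Ascoli then yields a subsequence, still denoted $u_{p_i,q_i}$, converging uniformly on $\overline\Omega$ to some $u_\infty\in C(\overline\Omega)$.

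Next I would control the gradients. As in the proof of Lemma~\ref{Lemma2.4}, testing \eqref{eq1.1} with $u_{p,q}$ gives $\|\nabla u_{p,q}\|_{L^p(\Omega)}=\sqrt[p]{\Lambda_{p,q}(\Omega)}\,\|u_{p,q}\|_{L^q(\Omega)}$. Fixing $m>n$, for every $p>m$ H\"older's inequality produces
$$
\|\nabla u_{p,q}\|_{L^m(\Omega)}\le|\Omega|^{\frac1m-\frac1p}\|\nabla u_{p,q}\|_{L^p(\Omega)}=|\Omega|^{\frac1m-\frac1p}\sqrt[p]{\Lambda_{p,q}(\Omega)}\,\|u_{p,q}\|_{L^q(\Omega)},
$$
which is bounded uniformly in $p$. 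Hence $\{\nabla u_{p_i,q_i}\}$ is bounded in $L^m(\Omega)$; along a further subsequence it converges weakly in $L^m(\Omega)$, and because $u_{p_i,q_i}\to u_\infty$ uniformly the weak limit is forced to be $\nabla u_\infty$. Thus $u_\infty\in W^{1,m}(\Omega)$ and weak lower semicontinuity of the $L^m$-norm gives
$$
\|\nabla u_\infty\|_{L^m(\Omega)}\le\liminf_{i\to\infty}\|\nabla u_{p_i,q_i}\|_{L^m(\Omega)}\le|\Omega|^{\frac1m}\,\limsup_{p_i,q_i\to\infty}\sqrt[p_i]{\Lambda_{p_i,q_i}(\Omega)}\,\|u_{p_i,q_i}\|_{L^{q_i}(\Omega)}.
$$
Since $u_\infty$ is already fixed, this estimate holds for each $m>n$.

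Finally I would send $m\to\infty$. As $\Omega$ has finite measure, $\lim_{m\to\infty}\|\nabla u_\infty\|_{L^m(\Omega)}=\|\nabla u_\infty\|_{L^\infty(\Omega)}$ and $|\Omega|^{1/m}\to1$, so the previous line forces $\nabla u_\infty\in L^\infty(\Omega)$ with $\|\nabla u_\infty\|_{L^\infty(\Omega)}\le\limsup_{p_i,q_i\to\infty}\sqrt[p_i]{\Lambda_{p_i,q_i}(\Omega)}\|u_{p_i,q_i}\|_{L^{q_i}(\Omega)}$; converting this sup-gradient bound into a Lipschitz estimate for the Euclidean distance on $\overline\Omega$ is what introduces the geometric constant $\mathfrak{C}=\mathfrak{C}(n,\Omega)$ in the statement. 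The step I expect to be most delicate is this final passage: one must verify that the bound on $\|\nabla u_\infty\|_{L^m(\Omega)}$ obtained for every $m$ genuinely upgrades, via $\|\cdot\|_{L^m}\to\|\cdot\|_{L^\infty}$, to an $L^\infty$ gradient bound, and, for non-convex $\Omega$, that this gradient bound controls the Euclidean Lipschitz seminorm with a constant depending only on $n$ and $\Omega$.
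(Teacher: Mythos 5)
Your argument is correct, but its second half takes a genuinely different route from the paper. For the uniform convergence you and the paper do the same thing: Lemma \ref{Lemma2.4} plus Arzel\`a--Ascoli (your device of fixing the exponent $\tfrac12$ once $p\ge 2n$ is the standard way to make the equicontinuity uniform in $p$, and it works). For the Lipschitz bound, however, the paper simply passes to the limit in the H\"older estimate of Lemma \ref{Lemma2.4}: for fixed $x\neq y$,
$$
|u_{p_i,q_i}(x)-u_{p_i,q_i}(y)| \;\le\; \hat{\mathfrak{C}}\,|\Omega|^{\frac1n-\frac1{p_i}}\,\sqrt[p_i]{\Lambda_{p_i,q_i}(\Omega)}\,\|u_{p_i,q_i}\|_{L^{q_i}(\Omega)}\,|x-y|^{1-\frac{n}{p_i}},
$$
and letting $i\to\infty$ (the exponents $1-n/p_i\to1$ and the constants converge) gives the Euclidean Lipschitz estimate in one stroke. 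You instead re-derive the energy identity, extract uniform $L^m$ bounds on the gradients, pass to weak limits, use lower semicontinuity, and then send $m\to\infty$; this mirrors the computation the paper performs inside the proof of Theorem \ref{teo.1} rather than its proof of this lemma. Your route buys slightly more: it shows $u_\infty\in W^{1,\infty}(\Omega)$ with the clean bound $\|\nabla u_\infty\|_{L^\infty(\Omega)}\le\limsup_{i}\sqrt[p_i]{\Lambda_{p_i,q_i}(\Omega)}\,\|u_{p_i,q_i}\|_{L^{q_i}(\Omega)}$. The price is exactly the step you flag at the end: converting the sup-gradient bound into a bound on the Euclidean difference quotient requires $\Omega$ to be quasiconvex (intrinsic distance comparable to Euclidean distance), which does hold here since the paper assumes $\partial\Omega$ smooth; note that the paper's route does not avoid this geometric input, it merely absorbs it into the Morrey constant $\mathfrak{C}(n,\Omega)$ of Lemma \ref{Lemma2.4}. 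So your proof is complete once you cite the two standard facts you identified ($\|f\|_{L^m(\Omega)}\to\|f\|_{L^\infty(\Omega)}$ on a finite measure space, and quasiconvexity of bounded smooth domains); the paper's argument is shorter because both the exponent and the constant in Lemma \ref{Lemma2.4} already converge to exactly what is needed.
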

\begin{proof} Existence of $u_{\infty}$ as an uniform limit is a direct consequence of the Lemma \ref{Lemma2.4} combined with an Arzel\`{a}-Ascoli compactness criteria. Finally, the last statement holds by passing to the limit in the H\"{o}lder's estimates from Lemma \ref{Lemma2.4}.
\end{proof}

The following lemma establishes a relation between weak and viscosity sub and super-solutions to \eqref{eq1.1}. We include the details for completeness.

\begin{lemma}\label{EquivSols} A continuous weak sub-solution (resp. super-solution) $u \in W_{\text{loc}}^{1,p}(\Omega)$ to \eqref{eq1.1} is a viscosity sub-solution (resp. super-solution) to
$$
   -\left[|\nabla u|^{p-2} \Delta u + (p-2)|\nabla u(x)|^{p-4}\Delta_{\infty} u\right] = \Lam_{p, q}(\Omega) \|u\|_{L^{q}(\Omega)}^{p-q}|u|^{q-2}u \quad \text{in} \quad \Omega.
$$
\end{lemma}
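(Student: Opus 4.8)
The plan is to prove the sub-solution statement by contradiction (the super-solution case being entirely symmetric, with maxima replaced by minima and the inequalities reversed), following the classical strategy of Juutinen--Lindqvist--Manfredi adapted to the increasing zero-order term $g_u$. Assume $u\in W^{1,p}_{\loc}(\Omega)\cap C(\Omega)$ is a weak sub-solution but fails to be a viscosity sub-solution. Then there are a point $x_0\in\Omega$ and $\phi\in C^2(\Omega)$ with $u-\phi$ attaining a strict local maximum at $x_0$, $\phi(x_0)=u(x_0)$, for which the viscosity inequality is violated, i.e. $-\Delta_p\phi(x_0)>g_u(\phi(x_0))$. Here I would first record that, since $p$ is large (in particular $p>2$), the map $x\mapsto\Delta_p\phi(x)$ extends continuously by $0$ across the critical set $\{\nabla\phi=0\}$, because both $|\nabla\phi|^{p-2}\Delta\phi$ and $(p-2)|\nabla\phi|^{p-4}\Delta_\infty\phi=O(|\nabla\phi|^{p-2})$ vanish there; hence no separate treatment of vanishing gradients is needed and $\Delta_p\phi$ is genuinely continuous near $x_0$.

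By continuity, I would choose $r>0$ small so that, on $B_r(x_0)$, one has the strict inequality $-\Delta_p\phi\geq g_u(\phi)+\ve_0$ with $\ve_0\defeq\frac12\big(-\Delta_p\phi(x_0)-g_u(\phi(x_0))\big)>0$, and so that $\phi>u$ on $\overline{B_r(x_0)}\setminus\{x_0\}$. Set $m\defeq\min_{\partial B_r(x_0)}(\phi-u)>0$ and note the crucial scaling fact: as $r\searrow0$ one has $m\to0$, while $\ve_0$ stays bounded below by a fixed positive number. The competitor is the downward shift $\tilde\phi\defeq\phi-m$. Since $\Delta_p$ is insensitive to additive constants and $g_u$ is increasing (as $q\geq2$), $\tilde\phi$ is still a strict classical, hence weak, super-solution: $-\Delta_p\tilde\phi=-\Delta_p\phi\geq g_u(\phi)+\ve_0\geq g_u(\tilde\phi)+\ve_0$ in $B_r(x_0)$. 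Moreover $\tilde\phi\geq u$ on $\partial B_r(x_0)$, whereas $\tilde\phi(x_0)=u(x_0)-m<u(x_0)$, so the open set $\{u>\tilde\phi\}$ is nonempty and compactly contained in $B_r(x_0)$.

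Then I would test the weak sub-solution inequality for $u$ and the strict weak super-solution inequality for $\tilde\phi$ against $\eta\defeq(u-\tilde\phi)^+\in W^{1,p}_0(B_r(x_0))$ and subtract. On $\{u>\tilde\phi\}$ one has $\nabla\eta=\nabla u-\nabla\tilde\phi$, so the monotonicity of $\xi\mapsto|\xi|^{p-2}\xi$ makes the resulting left-hand side nonnegative, while the right-hand side equals $\int(g_u(u)-g_u(\tilde\phi)-\ve_0)\eta\,dx$. On the support of $\eta$ one controls $0<u-\tilde\phi\leq\sup_{B_r}(u-\phi)+m=m$, and, $g_u$ being locally Lipschitz near $u(x_0)$ with some constant $L$, this forces $g_u(u)-g_u(\tilde\phi)\leq Lm$. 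Choosing $r$ so small that $Lm<\ve_0$ makes the integrand strictly negative on the positive-measure set $\{\eta>0\}$ (it contains a neighborhood of $x_0$, where $u-\tilde\phi=m>0$), contradicting the nonnegativity of the left-hand side. This contradiction establishes the sub-solution claim.

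The main obstacle is precisely that $g_u$ is increasing, so this is a ``wrong-sign'' reaction term for which the plain comparison principle may fail; one cannot simply invoke comparison to conclude $u\leq\tilde\phi$. The device that rescues the argument is quantitative: the strict-supersolution gap $\ve_0$ is fixed by the pointwise violation at $x_0$ and stays bounded away from $0$ as $r\searrow0$, whereas the boundary defect $m$ tends to $0$; since the zero-order term produces at most an $O(m)$ discrepancy through its local Lipschitz bound, the fixed gap $\ve_0$ dominates on a sufficiently small ball. One must also check that $u$ and $\phi$ remain in a bounded range where $g_u$ is Lipschitz (immediate from continuity of $u$ and $\phi\in C^2$) and that $\eta$ is admissible, which holds because $u\leq\tilde\phi$ on $\partial B_r(x_0)$ gives $\eta\in W^{1,p}_0(B_r(x_0))$.
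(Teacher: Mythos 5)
Your proof is correct, and its skeleton --- argue by contradiction, shift the test function by a constant tied to the boundary gap, test the two weak inequalities against the positive part of the difference, and invoke monotonicity of $\xi\mapsto|\xi|^{p-2}\xi$ --- is precisely the paper's argument (the paper writes out the super-solution case, with the upward shift $\Psi=\phi+\tfrac{1}{100}\mathfrak{m}$ and test function $(\Psi-u)_{+}$). The one genuine divergence is how the zero-order term is handled. You compare $g_u(u)$ with $g_u(\tilde\phi)$; since $\tilde\phi<u$ on the set $\{u>\tilde\phi\}$, that comparison has the unfavorable order, and you repair it quantitatively with the strict gap $\ve_0$, the local Lipschitz constant $L$ of $g_u$, and the choice of $r$ so small that $Lm<\ve_0$. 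The paper instead compares $g_u(\phi)$ --- not $g_u(\tilde\phi)$ --- with $g_u(u)$: because the test function touches $u$ from the correct side ($\phi\geq u$ in your sub-solution case, $\phi\leq u$ in the paper's super-solution case), monotonicity of $g_u$ gives the sign $g_u(u)-g_u(\phi)\leq 0$ outright, so no Lipschitz estimate, no $\ve_0$, and no shrinking-radius bookkeeping are needed. In other words, the increasing character of $g_u$ is the \emph{favorable} sign in this argument, not the ``wrong-sign'' obstacle your closing paragraph describes; what can fail for increasing reaction terms is global comparison/uniqueness, which neither proof uses. The contradictions are also read off slightly differently: you use strict negativity of the right-hand side on the positive-measure set $\{\eta>0\}$, while the paper uses the lower bound $\mathfrak{C}(p)\int|\nabla\Psi-\nabla u|^{p}\,dx$ to conclude $\Psi\leq u$ and contradict $\Psi(x_0)>u(x_0)$; both are fine (and your claim that $\{u>\tilde\phi\}$ is compactly contained in $B_r(x_0)$ is slightly stronger than needed --- all that matters is that $\eta$ is continuous, vanishes on $\partial B_r(x_0)$, and hence lies in $W^{1,p}_0(B_r(x_0))$). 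Your quantitative device costs a little extra work and buys nothing here, but it would survive in settings where the reaction term is merely locally Lipschitz rather than monotone, so it is the more robust variant of the same proof.
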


\begin{proof} Let us proceed for the case of super-solutions. Fix $x_0 \in \Omega$ and $\phi \in C^2(\Omega)$ such that $\phi$ touches $u$ by bellow, i.e., $u(x_0) = \phi(x_0)$ and $u(x)> \phi(x)$ for $x \neq x_0$. Our goal is to establish that
$$
  -\left[|\nabla \phi(x_0)|^{p-2}\Delta \phi(x_0) + (p-2)|\nabla \phi(x_0)|^{p-4}\Delta_{\infty} \phi(x_0)\right] -g_u(\phi(x_0)) \geq 0.
$$
Let us suppose, for sake of contradiction, that the inequality does not hold. Then, by continuity there exists  $r>0$ small enough such that
$$
   -\left[|\nabla \phi(x)|^{p-2}\Delta \phi(x) + (p-2)|\nabla \phi(x)|^{p-4}\Delta_{\infty} \phi(x)\right] -g_u(\phi(x)) < 0,
$$
provided that $x \in B_r(x_0)$. Now, we define the function
$$
   \Psi(x) \defeq \phi(x)+ \frac{1}{100}\mathfrak{m}, \quad \text{ where } \quad \mathfrak{m} \defeq \inf_{\partial B_r(x_0)} (u(x)-\phi(x)).
$$
Notice that $\Psi$ verifies $\Psi < u$ on $\partial B_r(x_0)$, $\Psi(x_0)> u(x_0)$ and
\begin{equation}\label{EqPsi}
 -\Delta_p \Psi(x) < g_u(\phi(x)).
\end{equation}
By extending by zero outside  $B_r(x_0)$, we may use $(\Psi-u)_{+}$ as a test function in \eqref{eq1.1}. Moreover, since $u$ is a weak super-solution, we obtain
\begin{equation}\label{Eq3.4}
  \displaystyle \int_{\{\Psi>u\}} |\nabla u|^{p-2}\nabla u \cdot \nabla (\Psi-u) dx \geq  \int_{\{\Psi>u\}} g_u(u)(\Psi-u) dx.
\end{equation}
On the other hand, multiplying \eqref{EqPsi} by $\Psi- u$ and integrating by parts we get
\begin{equation}\label{Eq3.5}
  \displaystyle \int_{\{\Psi>u\}} |\nabla \Psi|^{p-2}\nabla \Psi \cdot \nabla (\Psi-u) dx <  \int_{\{\psi>u\}} g_u(\phi)(\Psi-u) dx.
\end{equation}
Next, subtracting \eqref{Eq3.5} from  \eqref{Eq3.4} we obtain
\begin{align} \label{exxx}
  \int\limits_{\{\Psi>u\}} (|\nabla \Psi|^{p-2}\nabla \Psi - |\nabla u|^{p-2}\nabla u) \cdot \nabla (\Psi-u) dx <  \int\limits_{\{\psi>u\}} \mathcal{G}(\phi, u)(\Psi-u)dx,
\end{align}
where we have denoted $\mathcal{G}(\phi, u)=g_u(\phi)-g_u(u)$.
Finally, since the left hand side in \eqref{exxx} is bounded by below by
$$
  \mathfrak{C}(p)\int_{\{\Psi>u\}} |\nabla \Psi- \nabla u|^pdx,
$$
and the right hand side in \eqref{exxx} is negative, we can conclude that $\Psi \leq u$ in $B_r(x_0)$. However, this contradicts the fact that $\Psi(x_0)>u(x_0)$. Such a contradiction proves that $u$ is a viscosity super-solution.

Analogously we can prove that a continuous weak sub-solution is a viscosity sub-solution.
\end{proof}

The next comparison result plays an essential role in our approach.

\begin{thm}[{\cite[Theorem 10]{CP}}]\label{CompPrin}
Let $v$ and $u$ be respectively a super-solution and a sub-solution to
\begin{equation} \label{ec.comp}
\min \Big\{ -\Delta_\infty 	w, \,|\nabla w|-\lam_{1,\infty}(\Omega) w^\ell \Big\}= 0 \quad \text{ in } \Omega
\end{equation}
Suppose that both $u$ and $v$ are strictly positive in $\Omega$, continuous up to the boundary and satisfy $u\leq v$ on $\partial \Omega$. Then $u\leq v$ in $\overline{\Omega}$.
\end{thm}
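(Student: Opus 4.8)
The plan is to argue by contradiction through the doubling-of-variables technique together with the Crandall--Ishii theorem on sums, exploiting the sub-linearity $\ell<1$ to manufacture the strict inequalities that $-\Delta_\infty$ does not provide on its own. The first observation is that, because $\ell<1$, for every $t>1$ the rescaled function $v_t\defeq tv$ is again a super-solution of \eqref{ec.comp}, and a \emph{strict} one in its first-order part: $|\nabla v_t|=t|\nabla v|\ge t\lambda_{1,\infty}(\Omega)v^\ell=t^{1-\ell}\lambda_{1,\infty}(\Omega)v_t^\ell$ with $t^{1-\ell}>1$, while $-\Delta_\infty v_t=-t^3\Delta_\infty v\ge0$. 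Since $v\ge0$ on $\partial\Omega$ forces $u\le v\le v_t$ there, it suffices to prove $u\le v_t$ in $\Omega$ for each fixed $t>1$ and then let $t\searrow1$.

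Next I would pass to the variables $U\defeq u^{1-\ell}/(1-\ell)$ and $V_t\defeq v_t^{1-\ell}/(1-\ell)$. Using $\Delta_\infty(\psi\circ w)=\psi'(w)^3\Delta_\infty w+\psi'(w)^2\psi''(w)|\nabla w|^4$ with the concave choice $\psi(s)=s^{1-\ell}/(1-\ell)$, the first-order constraint becomes the \emph{constant} eikonal $|\nabla(\cdot)|=\lambda_{1,\infty}(\Omega)$, and one checks that the super-solution $V_t$ still satisfies $-\Delta_\infty V_t\ge0$ (both terms above are nonpositive) together with the strict bound $|\nabla V_t|\ge t^{1-\ell}\lambda_{1,\infty}(\Omega)$; on the sub-solution side the gradient branch transforms cleanly into $|\nabla U|\le\lambda_{1,\infty}(\Omega)$. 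Assuming $\max_{\overline\Omega}(U-V_t)>0$ — an interior maximum, by the boundary ordering — I would double variables via $\Phi_\varepsilon(x,y)=U(x)-V_t(y)-\tfrac{1}{2\varepsilon}|x-y|^2$ and apply the theorem on sums to produce $q_\varepsilon=(x_\varepsilon-y_\varepsilon)/\varepsilon$ and symmetric matrices $X\le Y$ in the respective semijets. In the case where the first-order branch is active for the sub-solution we then have $|q_\varepsilon|\le\lambda_{1,\infty}(\Omega)$, which is incompatible with $|q_\varepsilon|\ge t^{1-\ell}\lambda_{1,\infty}(\Omega)>\lambda_{1,\infty}(\Omega)$ coming from $V_t$. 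This case closes at once \emph{precisely because}, after the change of variables, the reaction is a constant and the two competitors are forced to share the same gradient $q_\varepsilon$; the $u^\ell$-dependent slack that prevents a contradiction in the original variables has been removed.

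The remaining case, where $-\Delta_\infty$ is the active branch for the sub-solution, is the genuine obstacle, and I expect it to be the decisive difficulty. Here the theorem on sums yields only the non-strict chain $0\le\langle Xq_\varepsilon,q_\varepsilon\rangle\le\langle Yq_\varepsilon,q_\varepsilon\rangle\le0$, an equality rather than a contradiction — the familiar degeneracy of the $\infty$-Laplacian, which no rescaling repairs since $\Delta_\infty$ is $3$-homogeneous, and against which the concave change of variables actively works (it preserves $\infty$-superharmonicity but not $\infty$-subharmonicity, since the term $\psi''<0$ drags $\Delta_\infty U$ downward). To break the tie I would regularize the sub-solution by sup-convolution and perturb it into a \emph{strictly} $\infty$-subharmonic competitor, so that $\langle Xq_\varepsilon,q_\varepsilon\rangle\ge\eta>0$ makes the chain contradictory; equivalently, one invokes Jensen's comparison-with-cones characterization of $\infty$-harmonic functions, which supplies the comparison that the naive jet argument cannot. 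Once both cases are excluded we obtain $U\le V_t$, hence $u\le v_t$, and letting $t\searrow1$ gives $u\le v$ in $\overline\Omega$; the sub-linear structure disposes of the first-order branch cleanly, and all the real work is concentrated in breaking the second-order degeneracy.
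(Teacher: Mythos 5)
First, a remark on what you are being compared against: the paper does not prove Theorem \ref{CompPrin} at all --- it is imported verbatim from \cite[Theorem 10]{CP} --- so the relevant comparison is with Charro--Peral's argument. Your scaffolding (the dilation $v_t=tv$ exploiting $\ell<1$, the power change of variables $w\mapsto w^{1-\ell}/(1-\ell)$, doubling of variables plus the theorem on sums) is essentially the right scaffolding, and your Case 1 is handled correctly: after the change of variables the gradient constraint becomes a constant eikonal inequality, the dilation makes it strict on the super-solution side, and the two bounds on the common vector $q_\varepsilon$ clash.

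The genuine gap is Case 2, which you explicitly leave open: you gesture at sup-convolution, at ``perturbing into a strictly $\infty$-subharmonic competitor,'' and at Jensen's comparison with cones, but none of these can be invoked as stated. Since $\Delta_\infty$ is not linear, there is no additive perturbation that turns a viscosity sub-solution into a strictly $\infty$-subharmonic one while preserving the min-equation structure and the boundary ordering; and comparison with cones characterizes $\infty$-harmonic functions, whereas your $u$ is only $\infty$-subharmonic at those points where the gradient branch fails, so that machinery does not apply off the shelf. What you missed is that the change of variables you already performed closes Case 2, provided you keep the zeroth-order term it generates instead of treating it as a nuisance that ``works against'' you. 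With $U=u^{1-\ell}/(1-\ell)$ and $V_t=v_t^{1-\ell}/(1-\ell)$, your own composition formula (applied to the inverse map, which is smooth and increasing on the set where $U,V_t>0$) transforms the second-order branches into
\begin{equation*}
-\Delta_\infty U \le \frac{\ell}{(1-\ell)\,U}\,|\nabla U|^4
\qquad\text{and}\qquad
-\Delta_\infty V_t \ge \frac{\ell}{(1-\ell)\,V_t}\,|\nabla V_t|^4
\end{equation*}
in the viscosity sense: the correction carries opposite signs on the two sides, so the super-solution becomes \emph{strictly} $\infty$-superharmonic wherever its gradient does not vanish, while the sub-solution acquires exactly the matching slack. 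In Case 2 the theorem on sums then yields, with the common gradient $q_\varepsilon$ and matrices $X\le Y$,
\begin{equation*}
\frac{\ell\,|q_\varepsilon|^4}{(1-\ell)\,V_t(y_\varepsilon)}
\;\le\; -\langle Y q_\varepsilon, q_\varepsilon\rangle
\;\le\; -\langle X q_\varepsilon, q_\varepsilon\rangle
\;\le\; \frac{\ell\,|q_\varepsilon|^4}{(1-\ell)\,U(x_\varepsilon)} .
\end{equation*}
Because the super-solution satisfies \emph{both} branches of the min-equation, its eikonal branch forces $|q_\varepsilon|\ge t^{1-\ell}\lambda_{1,\infty}(\Omega)>0$, so you may divide through and conclude $U(x_\varepsilon)\le V_t(y_\varepsilon)$, contradicting the assumed positive interior maximum of $U-V_t$. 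In other words, the strict monotonicity of $s\mapsto \ell/\big((1-\ell)s\big)$, combined with the nonvanishing gradient supplied by the first-order branch, is precisely the properness that the bare operator $-\Delta_\infty$ lacks; this zeroth-order strictness, not sup-convolution or cone comparison, is the engine of the cited proof, and without it (or an equivalent device) your argument does not yield the theorem.
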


\section{Proofs of the main results}

We prove Theorem \ref{teo.1} following the ideas in \cite{JLM}.

\begin{proof}[{\bf Proof of Theorem \ref{teo.1}}]

Fix $x\in\Omega$ and consider $\delta(x)=\dist(x,\partial \Omega)$ the distance function. Recall that such a function always is a solution to the minimization problem
\begin{equation} \label{dista}
	\lam_{1, \infty}(\Omega)= \frac{\displaystyle\|\nabla \delta\|_{L^{\infty}(\Omega)}}{
	\displaystyle\|\delta\|_{L^{\infty}(\Omega)}}.
\end{equation}
However, it is not (always) a genuine eigenfunction corresponding to $\lam_{1, \infty}(\Omega)$, because, in some cases, it is not a solution to the equation \eqref{eq.infty.p} as mentioned in the Introduction.

Since $\delta(x)$ is Lipschitz continuous and satisfies $|\nabla \delta(x)|=1$ a.e. $x\in\Omega$, putting it as a test function in \eqref{vari.pq} we obtain that
$$
	 \sqrt[p]{\Lam_{p,q}(\Omega)}  = \inf_{u\in W^{1,p}_0(\Omega) \setminus \{0\}} \frac{\displaystyle\left(\int_\Omega |\nabla u|^p\,dx\right)^\frac1p}{ \displaystyle\left( \int_\Omega |u|^{q}\,dx \right)^\frac{1}{q}} \leq \frac{1}{\displaystyle\left( \int_\Omega |\delta(x)|^{q}\right)^\frac{1}{q}},
$$
which from \eqref{dista} implies that
$$
  \limsup_{p,q\to\infty} \sqrt[p]{\Lam_{p,q}(\Omega)} \leq \lam_{1, \infty}(\Omega).
$$

Now, we can consider the eigenfunction $u_{p,q}$ corresponding to $\Lam_{p,q}(\Omega)$ normalized such that $\|u_{p,q}\|_{L^q(\Omega)}=1$. Consequently,
$$
	\left(\int_\Omega |\nabla u_{p,q}|^p\,dx\right)^{\frac{1}{p}} \leq \sqrt[p]{\Lam_{p,q}(\Omega)}.
$$
Hence we have an uniform bound in $p$ and $q$. Next, fix $m>n$ and for $p>q>m$ by H\"older's inequality, we obtain
$$
	\left(\int_\Omega |\nabla u_{p,q}|^m\,dx \right)^\frac1m \leq |\Omega |^{\frac1m - \frac1p}\Lam_{p,q}(\Omega).
$$
Thus, $\{u_{p,q}\}_{p,q\geq m}$ is uniformly bounded in $W^{1, m}_0(\Omega)$, for which, up to subsequences,
 we have that
$$
  u_{p, q}\cd u_\infty \quad \text{in} \quad  W^{1,m}(\Omega)
$$
and
$$
  u_{p, q} \to  u_{\infty} \quad \text{uniformly in} \quad  C^{0, \alpha}(\Omega) \,\,\, \text{for} \,\,\,\alpha \defeq 1-\frac{n}{m}.
$$
Now, for $M, N>m$ large enough, using the weak lower semi-continuity of the $L^M$ norm and uniform convergence we get that
$$
   \frac{\|\nabla u_\infty\|_{L^M(\Omega)}}{\|u_\infty\|_{L^N(\Omega)}} \leq  \liminf_{p, q\to\infty}\frac{\displaystyle\left( \int_\Omega |\nabla u_{p, q}|^M\,dx\right)^\frac{1}{M}}{\displaystyle\left( \int_\Omega |u_{p, q}|^N \,dx\right)^{\frac{1}{N}}}.
$$
Next, multiplying and dividing by $(\int_\Omega 	|u_{p, q}|^{q}\,dx)^\frac{1}{q}$ and using H\"older's inequality we obtain that
$$
\begin{array}{rcl}
  \frac{\displaystyle\|\nabla u_\infty\|_{L^M(\Omega)}}{\displaystyle\|u_\infty\|_{L^N(\Omega)}} & \leq & \displaystyle \liminf_{p, q\to\infty}\left( |\Omega|^{\frac{1}{M}-\frac{1}{p}}\sqrt[p]{\Lam_{p,q}(\Omega)}\frac{ \|u_{p, q}\|_{L^q(\Omega)}}{  \|u_{p, q}\|_{L^N(\Omega)}}\right) \\
   & \leq & \displaystyle |\Omega|^{\frac{1}{M}}\frac{\|u_{\infty}\|_{L^{\infty}(\Omega)}}{  \|u_{\infty}\|_{L^N(\Omega)}}\left( \liminf_{p, q\to\infty} \sqrt[p]{\Lam_{p,q}(\Omega)} \right)
\end{array}
$$
for fixed values of $M,N$. Finally, letting $M,N\to\infty$ and using the variational characterization of $\lam_{1, \infty}(\Omega)$ we obtain that
$$
   \lam_{1,\infty}(\Omega) \leq \liminf_{p, q\to\infty} \sqrt[p]{\Lam_{p,q}(\Omega)}.
$$
This ends the proof.
\end{proof}

Next, we will deduce the limit equation coming from \eqref{eq1.1} as $p,q\to \infty$, provided that $\displaystyle \ell \defeq \lim_{p,q\to \infty} \frac{p}{q}$.

\begin{proof}[{\bf Proof of Theorem \ref{teo.2}}]
First, we will show that $v_{\ell}$ is a viscosity sub-solution to \eqref{eq.infty.pq}. To this end, fix $x_0 \in \Omega$ and a test function $\phi \in C^2(\Omega)$ such that $v_{\ell}(x_0) = \phi(x_0)$ and the inequality $v_{\ell}(x) < \phi(x)$ holds for $x \neq x_0$.

We want to prove that
\begin{equation}\label{eq3.2}
  - \Delta_{\infty} \phi(x_0) \leq 0 \quad \text{and} \quad |\nabla \phi(x_0)|-\lambda_{1, \infty}(\Omega) \phi(x_0)^{\ell} \leq 0.
\end{equation}
Since $u_{p, q}$ converges locally uniformly to $v_{\ell}$, there exists a sequence $x_{p, q} \to x_0$ such that $u_{p, q}-\phi$ has a local maximum at $x_{p, q}$. Moreover, since $u_{p, q}$ is a weak sub-solution (resp. viscosity sub-solution according to Lemma \ref{EquivSols}) to \eqref{eq1.1}, we have that
$$
  -\frac{|\nabla \phi(x_{p, q})|^2 \Delta \phi(x_{p, q})}{p-2} - \Delta_{\infty} \phi(x_{p, q}) \leq  \frac{1}{p-2}\left( \frac{\Lam_{p, q}(\Omega)^{\frac{1}{p-4}}\phi(x_{p, q})^{\frac{q-1}{p-4}}}{|\nabla \phi(x_{p, q})|}\right)^{p-4}.
$$
Thus, $-\Delta_{\infty} \phi(x_0) \leq 0$ as $p, q \to \infty$. Finally, if
$$
   |\nabla \phi(x_0)|-\lam_{1, \infty}(\Omega)\phi(x_0)^{\ell} > 0
$$
as $p, q \to \infty$, then the right hand side of the above sentence goes to $-\infty$, which clearly yields a contradiction. Therefore \eqref{eq3.2} holds.

Now, it remains to prove that $v_{\ell}$ is a viscosity super-solution, i.e. we must show that, for each $x_0 \in \Omega $ and $\phi \in C^2(\Omega)$ such that $v_{\ell}-\phi$ achieves a strict local minimum at $x_0$, then
\begin{equation}\label{eq3.3}
   -\Delta_\infty \phi(x_0) \geq 0 \quad \text{or} \quad |\nabla \phi(x_0)|-\lam_{1, \infty}(\Omega)\phi(x_0)^{\ell} \geq 0.
\end{equation}
Again, there exists a sequence of points $x_{p, q} \to x_0$ such that $(u_{p, q}-\phi)(x_{p, q})$ is a local minimum for each $p$ and $q$. Then, as $u_{p, q}$ is a weak super-solution (consequently a viscosity super-solution according to Lemma \ref{EquivSols}), we get
$$
  -\left[|\nabla \phi(x_{p, q})|^{p-2}\Delta \phi(x_{p, q}) + (p-2)|\nabla \phi(x_{p, q})|^{p-4}\Delta_{\infty} \phi(x_{p, q})\right] \geq \Lam_{p,q}(\Omega) \phi(x_{p, q})^{q-1}.
$$
We can assume that $|\nabla \phi(x_0)|-\lambda_{1, \infty}(\Omega)\phi(x_0)^{\ell} < 0$  since otherwise \eqref{eq3.3} clearly holds. Thus, $|\nabla \phi(x_0)|>  \lambda_{1, \infty}(\Omega) \phi(x_0)^{\ell}>0$, and hence $|\nabla \phi(x_{p, q})| > 0$ for $p$ and $q$ large enough by continuity. Thus, we may divide by $(p-2)|\nabla \phi(x_{p, q})|^{p-4}$ the previous inequality to obtain the the following relation
$$
  -\frac{|\nabla \phi(x_{p, q})|^2 \Delta \phi(x_{p, q})}{p-2} - \Delta_{\infty} \phi(x_{p, q}) \geq  \frac{1}{p-2}\left( \frac{\Lam_{p, q}(\Omega)^{\frac{1}{p-4}}\phi^{\frac{q-1}{p-4}}(x_{p, q})}{|\nabla \phi(x_{p, q})|}\right)^{p-4}.
$$
The last sentence implies that $-\Delta_{\infty} \phi(x_0) \geq 0$ as $p, q \to \infty$. Hence \eqref{eq3.3} holds.
\end{proof}

\begin{example} In order to illustrate   Theorem \ref{teo.2} let us consider $\Omega = B_1(0)$ (the unit ball centered at the origin). In this context, the infinity ground state is precisely
$$
  v(x) = 1-|x|.
$$
In fact, we have that $-\Delta_{\infty} v(x) = 0$, when $x \neq 0$. Moreover, $\lam_{1, \infty}(\Omega) = 1$. Since there are no test functions $\phi$ touching $v$ from below at $x_0 = 0$, condition (2) in the definition \ref{DefVSlimeq} is automatically fulfilled. Now, if the function
$$
  \phi(x) = 1 + \mathfrak{a}\cdot x + \text{o}(|x|^2)
$$
touches $v$ from above, then we must have
$$
  1 + \mathfrak{a}\cdot x \geq 1-|x| \quad \text{as} \quad x \to 0.
$$
Hence, $|\mathfrak{a}|\leq 1$ and consequently
$$
  |\nabla \phi(0)|-\lam_{1, \infty}(\Omega)\phi^{\ell}(0) = |\mathfrak{a}|-1\leq 0,
$$
which assures that condition (1) in the definition \ref{DefVSlimeq} is satisfied.
\end{example}

Finally, the proof of Theorem \ref{teo.3} will be a direct consequence of the following two lemmas.

\begin{lemma} Let $v_{\ell}$ be the unique viscosity solution to \eqref{eq.infty.pq}. Then,
$$
  v_{\ell} \geq v_\infty \quad \text{ in } \quad \bar \Omega,
$$
for any $v_\infty$ viscosity solution to \eqref{eq.infty.p} normalized by $\| v_\infty \|_{L^\infty (\Omega)} =1$.
\end{lemma}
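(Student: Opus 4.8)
The plan is to invoke the comparison principle of Theorem~\ref{CompPrin} with $v_\ell$ playing the role of the super-solution and $v_\infty$ that of the sub-solution. Since $v_\ell$ is by hypothesis a viscosity solution of \eqref{eq.infty.pq}, it is in particular a super-solution of the equation \eqref{ec.comp} appearing in Theorem~\ref{CompPrin}. The crux of the argument is therefore to verify that the given solution $v_\infty$ of the $\infty$-eigenvalue problem \eqref{eq.infty.p} is a viscosity \emph{sub-solution} of the concave problem \eqref{eq.infty.pq}.

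The key elementary observation is that, because $v_\infty$ is normalized with $\|v_\infty\|_{L^\infty(\Omega)}=1$ and $v_\infty\ge 0$, we have $0\le v_\infty\le 1$; and since $0<\ell<1$, the inequality $t^{\ell}\ge t$ holds for every $t\in[0,1]$. Hence $v_\infty^{\ell}\ge v_\infty$ pointwise. Now let $\phi\in C^2(\Omega)$ touch $v_\infty$ from above at a point $x_0\in\Omega$, so that $\phi(x_0)=v_\infty(x_0)\in[0,1]$. Being a sub-solution of \eqref{eq.infty.p}, $v_\infty$ satisfies $\min\{-\Delta_\infty\phi(x_0),\,|\nabla\phi(x_0)|-\lam_{1,\infty}(\Omega)\phi(x_0)\}\le 0$. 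Using $\phi(x_0)^{\ell}\ge\phi(x_0)$ we get $|\nabla\phi(x_0)|-\lam_{1,\infty}(\Omega)\phi(x_0)^{\ell}\le |\nabla\phi(x_0)|-\lam_{1,\infty}(\Omega)\phi(x_0)$, so that, by monotonicity of the minimum,
$$
\min\{-\Delta_\infty\phi(x_0),\,|\nabla\phi(x_0)|-\lam_{1,\infty}(\Omega)\phi(x_0)^{\ell}\}\le \min\{-\Delta_\infty\phi(x_0),\,|\nabla\phi(x_0)|-\lam_{1,\infty}(\Omega)\phi(x_0)\}\le 0.
$$
This is precisely condition (1) of Definition~\ref{DefVSlimeq}, so $v_\infty$ is a sub-solution of \eqref{eq.infty.pq}.

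With this in hand I would close the argument as follows. Both $v_\ell$ and $v_\infty$ are strictly positive in $\Omega$ (as positive eigenfunctions, by the strong maximum principle), continuous up to $\partial\Omega$, and vanish there, so in particular $v_\infty\le v_\ell$ on $\partial\Omega$. Theorem~\ref{CompPrin} then yields $v_\infty\le v_\ell$ in $\overline{\Omega}$, which is the claim.

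I expect the main obstacle to be a careful handling of the hypotheses of the comparison principle rather than the sub-solution transfer, which is essentially algebraic. Specifically, Theorem~\ref{CompPrin} requires strict positivity of both competitors in $\Omega$, whereas $v_\ell$ and $v_\infty$ degenerate to $0$ on $\partial\Omega$; one must make sure the comparison is applied on the open set and extended to $\overline{\Omega}$ by continuity, and that the contact argument above is only ever used at interior points $x_0\in\Omega$, where $0\le\phi(x_0)=v_\infty(x_0)\le 1$ legitimizes the inequality $\phi(x_0)^{\ell}\ge\phi(x_0)$ that drives the whole proof.
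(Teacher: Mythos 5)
Your proposal is correct and follows essentially the same route as the paper's own proof: the inequality $v_\infty^{\ell}\ge v_\infty$ (valid since $0\le v_\infty\le 1$ and $\ell<1$) shows that $v_\infty$ is a viscosity sub-solution of \eqref{eq.infty.pq}, and then Theorem~\ref{CompPrin} gives $v_\infty\le v_\ell$ in $\overline{\Omega}$. Your version is in fact slightly more careful than the paper's, since you verify the sub-solution property at the level of test functions and check the positivity hypotheses of the comparison principle explicitly.
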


\begin{proof}
Since $\ell<1$ then $v_\infty\leq v_\infty^\ell$ for any $v_\infty$ normalized viscosity solution to \eqref{eq.infty.p}. Consequently, being $\lam_{1, \infty}(\Omega)>0$ we obtain (in the viscosity sense) that
$$
	\min\{ -\Delta_\infty v_\infty, |\nabla v_\infty|-\lam_{1, \infty}(\Omega) v_\infty^\ell\}\leq 0 \quad \text{ in } \Omega,
$$
i.e., $v_\infty$ is a viscosity sub-solution to \eqref{eq.infty.pq}. Recall that both $v_\infty$ and $v_{\ell}$ verify $v_{\ell}= v_\infty=0$ on $\partial\Omega$. Hence, by the Comparison Principle for sub-linear equations (Theorem \ref{CompPrin}) we obtain that $v_{\ell} \geq v_\infty$ in the whole $\overline{\Omega}$. This finishes the proof.
\end{proof}

\begin{lemma}
For each $\ell< 1$ let $v_{\ell}$ be the unique viscosity solution to \eqref{eq.infty.pq}. Then, there exists $\widehat{v} \in  W_0^{1,\infty}(\Omega)\cap C(\bar \Omega)$ such that
\begin{equation} \label{convv}
  \widehat{v} (x) = \lim_{\ell\nearrow 1} v_{\ell}(x) \quad \text{uniformly in } \overline{\Omega}.
\end{equation}
Furthermore, $\widehat{v}$ is a viscosity solution to \eqref{eq.infty.p}.
\end{lemma}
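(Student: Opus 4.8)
The plan is to first show that the family $\{v_\ell\}$ is monotone in $\ell$, then to upgrade the resulting pointwise limit to a uniform one by means of an equi-Lipschitz bound, and finally to identify the limit as a viscosity solution of \eqref{eq.infty.p} through a standard stability argument. I begin with monotonicity. Fix $0<\ell_1<\ell_2<1$. Since $0\le v_{\ell_2}\le 1$ and $s\mapsto t^s$ is nonincreasing for $t\in[0,1]$, we have $v_{\ell_2}^{\ell_2}\le v_{\ell_2}^{\ell_1}$. Feeding this into Definition \ref{DefVSlimeq}(1): if $\phi\in C^2(\Omega)$ touches $v_{\ell_2}$ from above at $x_0$, then either $-\Delta_\infty\phi(x_0)\le 0$, in which case the sub-solution test for the $\ell_1$-problem holds directly, or $|\nabla\phi(x_0)|-\lam_{1,\infty}(\Omega)\phi(x_0)^{\ell_2}\le 0$, whence $|\nabla\phi(x_0)|\le\lam_{1,\infty}(\Omega)\phi(x_0)^{\ell_2}\le\lam_{1,\infty}(\Omega)\phi(x_0)^{\ell_1}$. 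In either case $v_{\ell_2}$ is a viscosity sub-solution of the $\ell_1$-problem \eqref{eq.infty.pq}. As $v_{\ell_1}$ and $v_{\ell_2}$ are positive in $\Omega$, continuous up to $\partial\Omega$ and vanish there, the comparison principle (Theorem \ref{CompPrin}) yields $v_{\ell_2}\le v_{\ell_1}$ in $\overline{\Omega}$. Thus $v_\ell$ decreases as $\ell\nearrow 1$, and the pointwise limit $\widehat{v}(x)\defeq\lim_{\ell\nearrow 1}v_\ell(x)\ge 0$ exists.

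Next I would produce a Lipschitz bound that is uniform in $\ell$. Each $v_\ell$ arises (Theorem \ref{teo.2}) as the uniform limit of eigenfunctions $u_{p,q}$ normalized by $\|u_{p,q}\|_{L^q(\Omega)}=1$, so by Lemma \ref{Lemma2.4} together with the subsequent compactness lemma the Lipschitz constant of $v_\ell$ is controlled by $\mathfrak{C}\,\limsup_{p,q}\sqrt[p]{\Lambda_{p,q}(\Omega)}\,\|u_{p,q}\|_{L^q(\Omega)}$. By Theorem \ref{teo.1} this equals $\mathfrak{C}\,\lam_{1,\infty}(\Omega)$, which does not depend on $\ell$. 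Hence $\{v_\ell\}$ is equi-Lipschitz and uniformly bounded by $1$; combined with the monotone pointwise convergence, this forces (by an Arzel\`a--Ascoli argument, or by Dini's theorem applied to the continuous limit) uniform convergence $v_\ell\to\widehat{v}$ on $\overline{\Omega}$. The limit inherits the Lipschitz bound and, being a uniform limit of functions vanishing on $\partial\Omega$, satisfies the boundary condition, so $\widehat{v}\in W_0^{1,\infty}(\Omega)\cap C(\overline{\Omega})$.

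Finally I would pass to the limit in the equation by a stability argument, the only genuinely new feature being the convergence of the zero-order term $\phi^\ell\to\phi$ as $\ell\nearrow 1$, uniform on compact sets where $\phi$ stays positive. For the sub-solution part, if $\phi\in C^2(\Omega)$ touches $\widehat{v}$ \emph{strictly} from above at $x_0\in\Omega$, uniform convergence produces points $x_\ell\to x_0$ at which $v_\ell-\phi$ has a local maximum; applying Definition \ref{DefVSlimeq}(1) at $x_\ell$ and letting $\ell\nearrow 1$ gives $-\Delta_\infty\phi(x_0)\le 0$ or $|\nabla\phi(x_0)|-\lam_{1,\infty}(\Omega)\phi(x_0)\le 0$. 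The super-solution part is entirely analogous, using Definition \ref{DefVSlimeq}(2) and the conjunctive alternative. This exhibits $\widehat{v}$ as a viscosity solution of \eqref{eq.infty.p}.

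The main obstacle I anticipate is twofold. First, one must verify that the Lipschitz bound is genuinely uniform in $\ell$; this hinges on the fact that $\sqrt[p]{\Lambda_{p,q}(\Omega)}\to\lam_{1,\infty}(\Omega)$ irrespective of the rate $q/p\to\ell$, so the constant controlling the slope of $v_\ell$ does not degenerate as $\ell\nearrow 1$. Second, one must handle the logical ``or''/``and'' structure of the viscosity inequalities in the limit: passing to a further subsequence in $\ell$ if necessary, one selects whichever alternative persists and checks that the term $\lam_{1,\infty}(\Omega)\phi(x_\ell)^\ell$ converges to $\lam_{1,\infty}(\Omega)\phi(x_0)$, with the only delicate case being points where $\phi(x_0)=0$, which are ruled out in the interior by the positivity of $\widehat{v}$.
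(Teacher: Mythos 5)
Your proof is correct and follows essentially the same route as the paper: monotonicity of $\{v_\ell\}$ in $\ell$ via the comparison principle (Theorem \ref{CompPrin}), upgrade of the monotone pointwise limit to uniform convergence, and a standard viscosity stability argument to identify $\widehat{v}$ as a solution of \eqref{eq.infty.p}. The only cosmetic differences are that you verify $v_{\ell_2}$ is a sub-solution of the $\ell_1$-problem where the paper verifies $v_{\ell_1}$ is a super-solution of the $\ell_2$-problem (mirror images of the same comparison argument), and you supply details --- the $\ell$-uniform Lipschitz bound from Lemma \ref{Lemma2.4} and Theorem \ref{teo.1}, and the subsequence selection for the ``or''/``and'' alternatives --- that the paper compresses into ``standard uniform convergence results'' and ``details left to the reader.''
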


\begin{proof}
Let us see that $\{v_{\ell}\}_{\ell \in (0, 1)}$ is monotone decreasing in $\ell$.

Let $0<\ell_1<\ell_2<1$ and $v_{\ell_i}$ be a solution of \eqref{eq.infty.pq} with $\ell=\ell_i$ for $i=1,2$ normalized such that $\|v_{\ell_i}\|_{L^\infty (\Omega)}=1$. It follows that $v_{\ell_1}^{\ell_2}<  v_{\ell_1}^{\ell_1}$. Moreover, since $\lam_{1, \infty}(\Omega)>0$ it follows (in the viscosity sense) that
$$
\begin{array}{rcl}
  \min\left\{ -\Delta_\infty v_{\ell_1}, |\nabla v_{\ell_1}|- \lam_{1, \infty}(\Omega) v_{\ell_1}^{\ell_2}\right\} & \geq & \min\left\{ -\Delta_\infty v_{\ell_1}, |\nabla v_{\ell_1}|-\lam_{1, \infty}(\Omega) v_{\ell_1}^{\ell_1} \right\} \\
   & = & 0 \\
   & = & \min\left\{ -\Delta_\infty 	v_{\ell_2}, |\nabla v_{\ell_2}|-\lam_{1, \infty}(\Omega) v_{\ell_2}^{\ell_2}\right\},
\end{array}
$$
i.e., $v_{\ell_1}$ is a viscosity super-solution to \eqref{eq.infty.pq} with $\ell=\ell_2$. Since $v_{\ell_i}=0$, $i=1,2$ on $\partial\Omega$, from the Comparison Principle for sub-linear equations (Theorem \ref{CompPrin}) we obtain that $v_{\ell_1}\geq  v_{\ell_2}$ in the whole $\overline{\Omega}$.

Finally, since $\{v_{\ell}\}_{\ell \in (0, 1)} \subset W_0^{1,\infty}(\Omega)\cap C(\bar \Omega)$ is decreasing in $\ell$ and  bounded below by any $\infty-$ground state, \eqref{convv} follows from standard uniform convergence results.

Furthermore, the fact that the limit $\widehat{v}$ satisfies \eqref{eq.infty.p} in the viscosity sense follows from uniform convergence using the same steps used in the proof of Theorem \ref{teo.2} (we leave the details to the reader).
\end{proof}

\begin{remark}
Explicit solutions for the limit problem \eqref{eq.infty.pq} for a wide class of domains including the ball and the torus, among others, can be obtained as follows. Consider the ``ridge set'' of $\Omega$ defined as
$$
\begin{array}{rcl}
  \mathcal{R}_{\Omega} & = & \Big\{x \in \Omega: \dist(x, \Omega) \,\,\, \text{is not differentiable at}\,\,\,x \Big\} \\
   & = & \Big\{x \in \Omega: \exists \, x_1, x_2 \in \partial \Omega, \,\,x_1\neq x_2, \text{s.t}\,\,|x-x_1|=|x-x_2|=\dist(x, \partial \Omega)\Big\},
\end{array}
$$
as well as the set where the distance   achieves its maximum
$$
 \displaystyle \mathcal{M}_{\Omega} = \left\{x \in \Omega: \dist(x, \partial \Omega) = \max_{x \in \Omega} \dist(x, \partial \Omega)
 \right\}.
$$
Under the previous definition, we have that if $\mathcal{R}_{\Omega} = \mathcal{M}_{\Omega}$, then
$$
  v_\ell(x) = \left[\lam_{1, \infty}(\Omega)\left(\max_{x \in \Omega} \dist(x, \partial \Omega)\right)^{\ell}\right]^{\frac{1}{1-\ell}}\dist(x, \partial \Omega)
$$
is the unique positive viscosity solution to \eqref{eq.infty.pq},  see {\cite[Proposition 19]{CP}}.

Since
$$
\displaystyle \lam_{1, \infty}(\Omega) = \frac{1}{\mathfrak{R}}
= \frac{1}{\max\limits_{x \in \Omega} \dist(x, \partial \Omega)},
$$
we get that in this case all the  $v_\ell$ coincide: for any $\ell \in (0,1)$ we have that
$$
   v_\ell (x) = \left[\lam_{1, \infty}(\Omega)\left(\max_{x \in \Omega} \dist(x, \partial \Omega)\right)^{\ell}\right]^{\frac{1}{1-\ell}} \dist(x, \partial \Omega) = \frac{1}{\mathfrak{R}}\dist(x, \partial \Omega),
$$
being $\mathfrak{R}$ the radius of the biggest ball contained inside $\Omega$.

The last expression is the unique eigenfunction corresponding to the ground state for the $\infty-$eigenvalue problem. See \cite{Yu} for a proof of the simplicity of $\lam_{1, \infty}(\Omega)$ in this case.
\end{remark}

\section{Closing remarks}

We just mention that our approach is flexible enough in order to be applied for other classes of degenerate operators of $p$-laplacian type. Some interesting examples include the following:

 \begin{enumerate}
\item  \textit{Pseudo $p$-Laplacian operator}
$$
   \displaystyle -\tilde{\Delta}_p u \defeq -\sum_{i=1}^{n} \frac{\partial }{\partial x_i}\left(\left|\frac{\partial u}{\partial x_i}\right|^{p-2}\frac{\partial u}{\partial x_i}\right).
$$
The ``pseudo''-eigenvalue problem and its corresponding limit as $p \to \infty$ for such a class of operators is studied in \cite{BK2}.

\item \textit{Anisotropic $p$-Laplacian operator}
$$
   \displaystyle -\mathcal{Q}_p u \defeq -\div(\mathbb{F}^{p-1}(\nabla u)\mathbb{F}_{\xi}(\nabla u)),
$$
where $\mathbb{F}$ is an appropriate (smooth) norm of $\R^n$ and $1<p< \infty$. The necessary tools in order to study the anisotropic eigenvalue problem, as well as its limit as $p \to \infty$ can be found in \cite{BKJ}.

   \item Degenerate non-local operators of \textit{Fractional $p$-Laplacian} type
$$
   (-\Delta)_{\mathfrak{K}}^s u(x) \defeq C_{n, p, s}.\text{P.V.} \int_{\R^n} |u(x)-u(y)|^{p-2}(u(x)-u(y))\mathfrak{K}(x, y)dy,
$$
where $\mathfrak{K}: \R^n \times \R^n \to \R$ is a general singular kernel fulfilling the following properties: there exist constants $\Lambda \geq \lambda >0$ and $\mathfrak{M}, \varsigma >0$ fulfilling the following hypothesis
\begin{itemize}
  \item[\checkmark][{\bf Symmetry}] $\mathfrak{K}(x, y) = \mathfrak{K}(y, x)$ for all $x, y \in \R^n$;
  \item[\checkmark][{\bf Growth condition}] $\lambda \leq \mathfrak{K}(x, y).|x-y|^{n+ps} \leq \Lambda$ for $x, y \in \R^n$, $x \neq y$;
  \item[\checkmark][{\bf Integrability at infinity}] $0\leq \mathfrak{K}(x, y) \leq \frac{\mathfrak{M}}{|x-y|^{n+\varsigma}}$ for $x \in B_2$ and $y \in \R^n \setminus B_{\frac{1}{4}}$.
  \item[\checkmark][{\bf Translation invariance}] $\mathfrak{K}(x+z, y+z)= \mathfrak{K}(x, y)$ for all $x,y,z \in \R^n$, $x \neq y$.
  \item[\checkmark][{\bf Continuity}] The map $x \mapsto \mathfrak{K}(x, y)$ is continuous in $\R^n \setminus \{y\}$.
\end{itemize}
Clearly this previous class of operators have as prototype to  the fractional $p$-Laplacian operator provided that $\mathfrak{K}(x, y) = |x-y|^{-(n+ps)}$. The mathematical machinery in order to study the eigenvalue problem for this class of operators can be found in the following articles \cite{FerLla}, \cite{FP} and \cite{LL}.
\end{enumerate}

\subsubsection*{Acknowledgments}
This work has been partially supported by Consejo Nacional de Investigaciones Cient\'{i}ficas y T\'{e}cnicas (CONICET-Argentina). JVS would like to thank the Dept. of Math. FCEyN, Universidad de Buenos Aires for providing an excellent working environment and scientific atmosphere during his Postdoctoral program.

\end{document}